\shorttitle{Regular Variation in a Fixed-Point Problem} % insert short title here for use in running head
\numberwithin{equation}{section}
\newcommand{\indi}{\mathbb{I}}%{\bm{1}}
\newcommand{\RL}{\mathbb{R}} % real numbers
\newcommand{\nat}{{\mathbb N}} % 0,1,2,...
\newcommand{\dd}{\mathrm{d}} % dx, F(dz) etc
\newcommand{\oh}{{\mathrm{o}}} % o(1/n) etc
\newcommand{\Oh}{{\mathrm{O}}} % O(1/n) etc
\newcommand{\Exp}{\mathbb{E}}
\newcommand{\Prob}{\mathbb{P}}
\newcommand\convdistr{
  \xrightarrow{\:\scriptscriptstyle\smash{\mathcal{D}}\:}
}
\newcommand{\eqdistr}{\stackrel{{\footnotesize \cal D}}{=}}
\newcommand{\Fb}{{\overline F}}
\newcommand{\zb}{{\overline z}}
\newcommand{\nb}{{\overline n}}
\newcommand{\qb}{{\overline q}}
\newcommand{\rb}{{\overline r}}
\newcommand{\BB}{{\cal B}}
\newcommand{\bfa}{\mbox{\boldmath$a$}}
\newcommand{\bfb}{\mbox{\boldmath$ b$}}
\newcommand{\bfI}{\mbox{\boldmath$ I$}}
\newcommand{\bfN}{\mbox{\boldmath$ N$}}
\newcommand{\bffN}{\mbox{\scriptsize\boldmath$ N$}}
\newcommand{\bfS}{\mbox{\boldmath$ S$}}
\newcommand{\bfT}{\mbox{\boldmath$ T$}}
\newcommand{\bfV}{\mbox{\boldmath$ V$}}
\newcommand{\bffV}{\mbox{\scriptsize\boldmath$ V$}}
\newcommand{\bfv}{\mbox{\boldmath$ v$}}
\newcommand{\bfX}{\mbox{\boldmath$X$}}
\newcommand{\bfM}{\mbox{\boldmath$ M$}}
\newcommand{\bfz}{\mbox{\boldmath$ z$}}
\newcommand{\bfZ}{\mbox{\boldmath$ Z$}}
\newcommand{\bftheta}{\mbox{\boldmath$ \theta$}}
\newcommand{\bfTheta}{\mbox{\boldmath$ \Theta$}}
\newcommand{\compl}{{\mbox{\normalsize c}}}
\newcommand{\MRVF}{{\mbox{{\rm MRV}$(F)$}}}
\newcommand{\rev}[1]{#1}
\newcommand{\tree}{{\mathcal G}}
\newcommand{\gb}{node {$\bullet$}}  \newcommand{\gbb}{node {{\footnotesize$\blacktriangle$}}}
\newcommand{\treefig}{
\begin{figure}[htb]
   \centering  
\begin{tikzpicture}[xscale=1,yscale=0.5] 
\draw[thin] (0,4) -- (1,4) (1,1) -- (1,7) (1,1) -- (2,1) (1,2) -- (2,2) (1,4) -- (3,4) (1,5) -- (2,5) (1,7) -- (2,7);
\draw[thin] (2,0.5) -- (2,1.5) (2,6.5) -- (2,7.5) (2,1.5) -- (4,1.5) (2,6.5) -- (3,6.5) (2,7.5) -- (4,7.5) (3,3.5) -- (3,4.5);
\draw[green] (0,4) \gb (1,1) \gb (1,2) \gb (1,3) \gb (1,5) \gb (1,6) \gb;
\draw[red] (1,4) \gb (1,1) (1,7) \gb (1,1); 
\draw[green] (2,0.5) \gb (2,1.5) \gb (2,4) \gbb (2,5) \gb (2,7) \gbb;
\draw[red] (2,2) \gb (2,6.5) \gb (2,7.5) \gb;
\draw[blue] (2,2) \gb;
\draw[green] (3,3.5) \gb (3,7.5) \gbb (4,1.5) \gb;
\draw[red] (3,6.5) \gb;
\draw[blue] (3,1.5) \gb  (3,4.5) \gb (4,7.5) \gb;
\end{tikzpicture}
\caption{Reducing from 2  types to 1}
\label{treefig}
\end{figure}
}
\begin{document}

\title{Regular Variation in a \rev{Fixed-Point} Problem for Single- and Multiclass Branching Processes and Queues } 

\authorone[Aarhus University]{S\O ren Asmussen} 
\addressone{Department of Mathematics, Aarhus University, Ny Munkegade, 8000 Aarhus C, Denmark} 
\authortwo[Heriot-Watt  University and\\ 
. \hspace{3.1cm} Novosibirsk State University]{Sergey Foss}
\addresstwo{School of Mathematical and Computer Sciences, Heriot-Watt University, EH14 4AS, Edinburgh, United Kingdom. Research  supported by RSF grant No. 17-11-01173.}

\begin{abstract}
Tail asymptotics of the solution $R$ to a \rev{fixed-point} problem of type $R \ \eqdistr\ Q + \sum_1^N R_m$
is derived under heavy-tailed conditions allowing both dependence between $Q$ and $N$ and the 
tails to be of the same order of magnitude. Similar results are derived for a $K$-class version with applications
 to multitype branching processes and busy periods in multiclass queues.\end{abstract}

\keywords{Busy period; Galton-Watson process; intermediate regular variation; multivariate regular variation; 
random recursion; random sums} % insert keywords separated by a semicolon

\ams{60H25}{60J80; 60K25; 60F10} % insert the primary Maths Subject Classification number in the first bracket
         % and the secondary ams number(s) in the second bracket
         % e.g. \ams{60E20}{49G03;49F10}

%%%%%%%%%%%%%%%%%%%%%%%%%%%%%%%%%%%%%%%%%%%%%%%%%%%%%%
\section{Introduction}\label{S:Intr}
\setcounter{equation}{0}

This paper is concerned with the tail asymptotics of the solution $R$ 
to the \rev{fixed-point} problem
\begin{equation}\label{eq1}
 R \ \eqdistr\ Q + \sum_{m=1}^N R_m
\end{equation}
under suitable regular variation (RV) conditions
and the similar problem in a multidimensional setting stated below as \eqref{30.6a}. Here 
in \eqref{eq1} $Q,N$ are (possibly dependent) non-negative non-degenerate r.v.'s where $N$ is integer-valued, $R_1,R_2,\ldots$
are i.i.d.\ and distributed as $R$, and $\nb=\Exp N<1$ (similar notation for expected values is used in the following).

A classical example is $R$ being the M/G/1 busy period, cf.\ \cite{deMeyer}, \cite{Bert}, where $Q$ is the service
time of the first customer in the busy period and $N$ the number of arrivals during his service. Here 
$Q$ and $N$ are indeed heavily dependent, with tails of the same order of magnitude when $Q$ has a regularly varying (RV) distribution; more precisely,
$N$ is Poisson$(\lambda q)$ given $Q=q$. Another example is the total progeny of a subcritical branching
process, where $Q\equiv 1$ and $N$ is the number of children of the ancestor, More generally, $R$ could be
the total life span of the individuals in a Crump-Mode-Jagers process (\cite{Jagers}), corresponding to 
$Q$ being the lifetime of the ancestor and $N$  the number of her children. Related examples are weighted
branching processes, see~\cite{Mariana} for references. Note that connections between branching processes and RV
have a long history, going back at least to \cite{Seneta69}, \cite{Seneta74}.

Recall some definitions of classes of heavy-tailed distributions.
A distribution $F$ on the real line is {\it long-tailed}, $F\in {\cal L}$, %belongs to the class ${\cal L}$ of {\it long-tailed} distributions 
if, for some $y>0$
\begin{equation}\label{long}
\frac{\overline{F}(x+y)}{\overline{F}(x)} \rightarrow 1\quad\text{as}\ x\to\infty ; 
\end{equation}
$F$ is {\it regularly varying}, $F\in {\cal RV}$, 
%(we may write equivalently $\overline{F}(x+y) \sim \overline{F}(x)$). Since distribution functions are monotone, if \eqref{long}
%holds for some $y>0$, it holds for all $y$ and, moreover, uniformly in $|y|\le C$, for any fixed $C$.
%Therefore, if $F\in {\cal L}$, then there exists a positive function $h(x)\to\infty$ such
%that $\overline{F}(x-h(x))\sim \overline{F}(x) \sim \overline{F}(x+h(x))$, in this case we say that
%the tail distribution $\overline{F}$ {\it is} $h$-{\it insensitive}.\\
%2. Distribution $F$ on the real line belongs to the class ${\cal RV}$ of {\it regularly varying} distributions 
if, for some 
$\beta >0$, 
$
\overline{F}(x) = x^{-\beta} L(x),
$
where $L(x)$ is a {\it slowly varying} (at infinity) function;\\
%$F$ on the real line belongs to the class ${\cal D}$ of {\it dominantly varying} distributions  if there exists
%$\alpha >1$ (or, equivalently, for all $\alpha >1$) such that
%$$
%\lim \inf_{x\to\infty}
%\frac{\overline{F}(\alpha x)}{\overline{F}(x)} > 0
%$$
%4. Distribution 
$F$ is {\it intermediate regularly varying}, $F\in {\cal IRV}$,
% of {\it intermediate regularly varying} distributions 
if
\begin{equation}\label{IRV1}
\lim_{\alpha \uparrow 1} \limsup_{x\to\infty}
\frac{\overline{F}(\alpha x)}{
\overline{F}(x)} = 1.
\end{equation}
It is known that ${\cal L} \supset {\cal IRV} \supset {\cal RV}$
and if $F$ has a finite mean, then ${\cal L} \supset {\cal S}^*
\supset {\cal IRV}$
where ${\cal S}^*$ is the class of so-called {\it strong subexponential
distributions}, see e.g. \cite{EKM1997} or \cite{FKZ2013} for further
definitions and properties of heavy-tailed distributions.

Tail asymptotics of quantities related to $R$ have earlier been studied in
\cite{Mariana}, \cite{Litvak} under RV conditions (see also~\cite{BDM}). 
 Our main result is the following: 
\begin{theorem}\label{Th:3.8a}
Assume $\nb<1$ and $ \qb<\infty$. Then:\\[1mm]
{\rm (i)}
There is only one \rev{non-negative} solution $R$ to equation \eqref{eq1} with finite mean. For this solution,
$\rb=\qb/(1-\nb)$.\\[1mm]
{\rm (ii)} If further\\
{\bf (C)}
the distribution of $Q+cN$ is intermediate regularly varying for
all $c>0$ in the interval $(\rb -\epsilon, \rb+\epsilon )$ where $\rb$ is as in {\rm (i)} and
$\epsilon >0$ is any small number,\\
then 
\begin{equation}\label{eq10}
 \Prob   (R>x) \sim \frac{1}{1-\nb} \Prob(Q+\rb %   \left( Q + \frac{\Exp    Q}{1-\Exp    N} 
N >x)\quad \text{ as }x\to\infty\,.
\end{equation}
{\rm (iii)} In particular, condition {\bf (C)} holds in the following three cases:\\
{\rm (a)} $(Q,N)$ has a 2-dimensional regularly varying distribution;\\
{\rm (b)} $Q$ has an intermediate regularly varying distribution and $\Prob   (N>x) = \oh(\Prob   (Q>x))$;\\
{\rm (c)} $N$ has an intermediate regularly varying distribution and $\Prob   (Q>x) = \oh(\Prob   (N>x))$.
\end{theorem}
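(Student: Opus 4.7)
The plan has three parts, tracking the theorem's structure.

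For part~(i), the approach is the standard Galton--Watson tree representation. Build a tree $\mathcal{T}$ rooted at $\emptyset$, attach i.i.d.\ copies $(Q_v,N_v)$ of $(Q,N)$ to each node $v$, and let $v$ have $N_v$ children. Subcriticality $\nb<1$ makes $|\mathcal{T}|$ a.s.\ finite with $\Exp|\mathcal{T}|=1/(1-\nb)$, so $R:=\sum_{v\in\mathcal{T}} Q_v$ is a finite-mean solution of \eqref{eq1}; the identity $\rb=\qb/(1-\nb)$ is immediate on taking expectations. For uniqueness, iterating \eqref{eq1} $k$ times gives $R\ \eqdistr\ S_k+E_k$ where $S_k$ is the sum of $Q_v$'s over the first $k$ generations and $\Exp E_k=\nb^k\rb\to 0$, forcing $R$ to coincide in distribution with $\sum_v Q_v$.

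For part~(ii), set $S:=Q+\rb N\in{\cal IRV}$ (by (C)) and aim at matching lower and upper bounds. For the lower bound, localize a ``big jump'' at a single node $v\in\mathcal{T}$: on the event $X_v:=Q_v+\rb N_v>x(1+\eta)$ for small $\eta>0$, the $N_v$ child-subtrees of $v$ contribute $\sim\rb N_v$ by a weak LLN and the sibling/ancestor contributions are close to their means, so $\{R>x\}$ occurs. Since $(Q_v,N_v)$ is independent of the event $\{v\in\mathcal{T}\}$ (which depends only on the ancestral $N_u$'s), summing over $v$ and using $\sum_v\Prob(v\in\mathcal{T})=\Exp|\mathcal{T}|=1/(1-\nb)$ yields $\Prob(R>x)\geq(1-\oh(1))\Prob(S>x(1+\eta))/(1-\nb)$, after which (C) lets $\eta\downarrow 0$. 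For the upper bound, write $R=\sum_{v\in\mathcal{T}}X_v-\rb(|\mathcal{T}|-1)$ with $X_v$ i.i.d.\ copies of $S$ and run a principle-of-big-jump argument on the random sum $\sum_v X_v$: with probability $1-\oh(\Prob(S>x))$ at most one $X_v$ exceeds $\delta x$, the collective small-jump contribution is controlled by Markov using the finite mean $\Exp|\mathcal{T}|\Exp S$, and the tail of $|\mathcal{T}|$ is absorbed into that of $S$. The main obstacle sits here: the dependence between $Q$ and $N$ at each node and the randomness of $\mathcal{T}$ make the PBJ estimates delicate, and the IRV class is precisely the strength needed for matching constants.

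For part~(iii), verification of (C) is routine. In (a), 2-dimensional regular variation of $(Q,N)$ implies $Q+cN$ is RV (hence IRV) for every $c>0$ via the spectral/polar representation of MRV. In (b), $\Prob(N>x)=\oh(\Prob(Q>x))$ with $Q\in{\cal IRV}$ gives $\Prob(Q+cN>x)\sim\Prob(Q>x)$ via the bound $\Prob(Q+cN>x)\leq\Prob(Q>(1-\epsilon)x)+\Prob(cN>\epsilon x)$ combined with IRV of $Q$, and tail-equivalence preserves IRV; case (c) is symmetric. Uniformity in $c$ over a small neighbourhood of $\rb$ is automatic since $c\mapsto\Prob(Q+cN>x)$ is monotone.
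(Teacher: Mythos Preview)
Parts (i) and (iii) are fine, and your lower bound for (ii) is close in spirit to an alternative the paper itself sketches. The genuine gap is your upper bound.

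Your identity $R=\sum_{v\in\mathcal{T}}X_v-\rb(|\mathcal{T}|-1)$ is correct; enumerating the nodes of $\mathcal{T}$ sequentially it becomes $R\eqdistr\sum_{i=1}^\tau\psi_i+\rb$ with $\psi_i=Q_i+\rb(N_i-1)$ and $\tau=|\mathcal{T}|$ a stopping time for the filtration of the $(Q_i,N_i)$. But $\Exp\psi_i=\qb+\rb(\nb-1)=0$, so this is a \emph{zero-drift} walk stopped at a time that depends on the walk through the $N_i$. Your ``Markov'' step cannot close the argument: Markov on the truncated small-jump sum gives only $O(1/x)$, which is never $\oh\bigl(\Prob(S>x)\bigr)$ since finite mean forces $\Prob(S>x)=\oh(1/x)$. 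And ``the tail of $|\mathcal{T}|$ is absorbed into that of $S$'' is circular: the tail of $|\mathcal{T}|$ is exactly the statement of the theorem with $Q\equiv1$, so you cannot use it as input. In short, the dependence between the summands $X_v$ and the random index set $\mathcal{T}$ is not decoupled by anything you have written.

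The paper's device is to perturb: take $r>\rb$ and redefine $\psi_i=Q_i+r(N_i-1)$, so that $\Exp\psi_i<0$. Then
\[
\Prob(R>x)=\Prob\Bigl(\sum_{i=1}^\tau\psi_i>x-r\Bigr)\le\Prob\Bigl(\max_{k\le\tau}\sum_{i=1}^k\psi_i>x-r\Bigr),
\]
and the Foss--Zachary theorem on the maximum, up to a stopping time, of a negatively drifted random walk with ${\cal S}^*$ increments gives this as $\sim\Exp\tau\cdot\Prob(Q+rN>x)$. Only \emph{after} obtaining this for each fixed $r>\rb$ does one let $r\downarrow\rb$, and condition (C) --- IRV of $Q+cN$ for $c$ in a neighbourhood of $\rb$ --- is precisely what makes the constant match in that limit. (The lower bound is handled symmetrically with $r<\rb$.) This perturbation trick together with the Foss--Zachary result is the missing idea in your upper bound.
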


Part (i) is well known from several sources and not deep (see 
the proof of the more general
Proposition~\ref{Prop:6.8a} below and the references at the end of
the section for more general versions). 
Part (ii) generalizes and unifies  results of \cite{Mariana}, \cite{Litvak} in several ways. 
Motivated from the Google page rank
algorithm, both of these papers consider a more general recursion 
\begin{equation}\label{eq1A}
 R \ \eqdistr\ Q + \sum_{m=1}^N A_mR_m\,.
\end{equation} 
However, \cite{Mariana} does not allow dependence and/or the tails of $Q$ and $N$ to be equally heavy.
These features are incorporated in \cite{Litvak}, but on the other hand that paper require strong conditions
on the $A_i$ which do not allow to take $A_i\equiv 1$ when dealing with sharp asymptotics. To remove all of these restrictions is essential for
the applications to queues and branching processes we have in mind.  
Also, our proofs are considerably simpler and shorter
than those of \cite{Mariana}, \cite{Litvak}. The key tool is a general result of \cite{FossZ}
giving the tail asymptotics of the maximum of a random walk up to a (generalised) stopping time.

\begin{remark}
\rev{In Theorem \ref{Th:3.8a}, we considered the case $A_i\equiv 1$ only. However, our approach may work in the more
general setting of~\eqref{eq1A} with i.i.d. positive $\{A_m\}$ that do not depend on $Q,N$ and $\{R_m\}$. For example, if we assume, in addition to
$\nb <1$, 
that $\Prob (0<A_1\le 1) =1$,
then the exact tail asymptotics for $\Prob (R>x)$ may be easily found using the upper bound
\eqref{eq10} and the principle of a single big jump. However, the formula
for the tail asymptotics in this case is much more complicated that \eqref{eq10}.} 
\end{remark}

The multivariate version involves  a \rev{set} $\bigl(R(1),\ldots,(R(K)\bigr)$ of r.v.'s satisfying
\begin{equation}\label{30.6a}R(i) \ \eqdistr\  Q(i)+ \sum_{k=1}^K\sum_{m=1}^{N^{(k)}(i)}R_{m}(k)\end{equation}
In the branching setting, this relates to $K$-type   processes by thinking of $N^{(k)}(i)$ as the number of type $k$
children of a type $i$ ancestor.
One example is the total progeny where $Q(i)\equiv 1$, others relate as above to the total life span and
weighted branching processes.
% let $\tree(i)$ be the multitype GW family tree.
%Then $R(i)$ is the sum over $\tree(i)$
%of weights $(Q_g)_{g\in \tree}$ that are independent given $\tree(i)$ and with $Q_g$ distributed as $Q(k)$
%when $g$ is of type $k$.\\
A queueing example is the busy periods $R(i)$ in the multiclass queue in \cite{AEH}, with $i$ the class of the first
customer in the busy period and $Q(i)$   the service time of a class $i$ customer; the model states that during
service of a class $i$ customer, class $k$ customers arrive at rate $\lambda_{ik}$.  One
should note for this example  \cite{AEH} \rev{gives} only lower asymptotic bounds,
whereas we here provide sharp asymptotics.

The treatment of \eqref{30.6a} is considerably more involved than for \eqref{eq1}, and we defer the details
of assumptions and results to Section~\ref{S:MRV}. We remark here only that the concept of multivariate
regular variation (MRV) will play a key role; that
the analogue of the crucial assumption $\nb<1$ above is subcriticality, $\rho=$spr$(\bfM)<1$ where spr means spectral radius and
$\bfM$ is the offspring mean matrix with elements
$m_{ik}=\Exp N^{(k)}(i)$; and that the argument will involve a recursive procedure from \cite{Foss1, Foss2}, 
reducing $K$ to $K-1$ so that in the end we are back to  the case $K=1$ of \eqref{eq1} and Theorem~\ref{Th:3.8a}.
\rev{\subsubsection*{Bibliographical remarks}
 An $R$, or its distribution, satisfying \eqref{eq1A} is often called a fixed point of the smoothing transform (going back to~\cite{Liggett}). There is an extensive literature
 on this topic, but rather than on tail asymptotics, the emphasis is most often on existence and uniqueness questions
(these are easy in our context with all r.v.'s non-negative with finite mean and we give
short self-contained proofs). Also the assumption $A_i\neq 1$ is crucial for most of this
literature. See further \cite{Aldous}, \cite{Gerold1}, \cite{Gerold2}
and references there.}

\rev{It should be noted that the term ``multivariate smoothing transform" 
(e.g.~\cite{Mentemeier})  means to a recursion
of vectors, that is, a version of \eqref{eq1} with $R,Q\in \RL^K$. This is different  from our set-up because in~\eqref{30.6a} we are only interested in the one-dimensional
distributions of the $R(i)$. In fact, for our applications there is no interpretation of a vector  with $i$th marginal having the distribution of $R(i)$.
 }

In \cite{Vatutin}, tail asymptotics for the
total progeny of a multitype branching process is studied by 
different techniques€‹ in the critical case $\rho =1$.

%%%%%%%%%%%%%%%%%%%%%%%%%%%%%%%%%%%%%%%%%%%%%%%%%%%%%%
\section{One-dimensional case: equation \eqref{eq1}}\label{S:1D}
\setcounter{equation}{0}

The heuristics behind \eqref{eq10} is the principle of a single large jump: for $R$ to exceed $x$, either one or both elements of $(Q,N)$
must be large, or the independent
event occurs that $R_m>x$ for some   $m\le N$, in which case $N$ is small or moderate.
If $N$ is large, $\sum_1^NR_m$ is approximately $\rb N$, so roughly the probability of the first possibility is 
$\Prob(Q+\rb N>x)$. On the other hand,   results for compound heavy-tailed sums suggest that the  approximate probability of the second
possibility is $\nb\Prob(R>x)$. We thus arrive at
\[\Prob(R>x) \approx\ \Prob(Q+\rb N>x)\,+\,\nb\Prob(R>x)\]
and \eqref{eq10}.

%\vspace{1cm}
%
%{\bf \it SF: I have to check the paper by DENISOV, FOSS and KORSHUNOV (2010, Bernoulli) for a relative result on random sums -- just for a reference
%if needed}

%\vspace{1cm}

In the proof of Theorem~\ref{Th:3.8a},  let $(Q_1,N_1),(Q_2,N_2),\ldots$ be an i.i.d.\ sequence of pairs distributed as the (possibly dependent)
pair $(Q,N)$ in \eqref{eq1}. Then $S_n=\sum_{i=1}^n \xi_i$, $i=0,1,\ldots$ where $\xi_i=N_i-1$
is a random walk. Clearly,
$\Exp    \xi_i <0$. Let
\begin{equation}\label{eq2}
\tau = \min \{n\ge 1 \ : \ S_n<0\}= \min \{n\ge 1 \ : \ S_n=-1\}\,.
\end{equation}
Note that by Wald's identity $\Exp S_\tau=\Exp\tau\cdot\Exp (N-1)$ and $S_\tau=-1$ we have
\begin{equation}\label{3.8b}
\Exp\tau = \frac{1}{1-\Exp N}
\end{equation}
Now either $N_1=0$, in which case $\tau=1$, or $N_1>0$ so that $S_1=N_1-1$ and to proceed to level -1,
the random walk must go down one level $N_1$ times. This shows that  (in obvious notation)
\begin{equation}\label{3.8d}
 \tau \ \eqdistr\ 1 + \sum_{i=1}^N \tau_i
\end{equation}
That is, $\tau$ is a solution to \eqref{eq1} with
$Q\equiv 1$. On the other hand, the total progeny in a Galton-Watson process with the number of offsprings
of an individual distributed as $N$ obviously also satisfies \eqref{3.8d}, and hence by uniqueness
must have the same distribution as $\tau$. This result first occurs as equation (4) in \cite{Dwass}, but note that
an alternative representation (1) in that paper appears to have been the one receiving the most attention
in the literature.

Now define $\varphi_i=k_0+k_1Q_i$,
\begin{equation}\label{eq3}
V=\sum_{i=1}^{\tau} \varphi_i
\end{equation}
Here the $k_0,k_1$ are non-negative constants, $k_0+k_1>0$.
In particular, if $k_0=1,k_1=0$, then $V=\tau$, and 
%if
%$k_0=0, k_1=1$, then $V=\sum_1^\tau Q_i\eqdistr R$ and 
further
\begin{equation}\label{3.8c}
k_0=0, k_1=1\qquad\Rightarrow\qquad V\eqdistr R.
\end{equation}
\rev{Indeed, arguing as before, we conclude that equation $V\eqdistr \varphi +\sum_1^N V_i$ has only one integrable positive solution, and, clearly,
$$
V \eqdistr \varphi + \sum_1^N V_i \eqdistr \varphi + \sum_1^N \varphi_i + \sum_1^N \sum_1^{N_i} \varphi_{i,j} + \sum_1^N \sum_1^{N_i} \sum_1^{N_{i,j}} \varphi_{i,j,k} +
\ldots \eqdistr \sum_1^{\tau} \varphi_i
$$
where, like before, $(\varphi, N)$, $(\varphi_i,N_i)$, $(\varphi_{i,j},N_{i,j})$, etc. are
i.i.d. vectors. In particular, $V$ becomes $R$ when replacing $\varphi$ by $Q$.}
%This follows simply because the way the dependence between the sequences $S_n$ and $\varphi_i$
%allows the above derivation of \eqref{3.8d} to be generalized to give $V\eqdistr \varphi +\sum_1^N V_i$,
%and uniqueness of the solution to \eqref{eq1} then gives \eqref{3.8c}.

\begin{proof}[Proof of Theorem~\ref{Th:3.8a}]
%The existence and uniqueness follow from the monotonicity arguments and from the Beppo
%Levi theorem (to be added later).
It remains to find the asymptotics of $\Prob (V>x)$ as
$x\to\infty$. Throughout the proof, we assume $k_1>0$.

Let $r_0$ be the solution to the 
equation
$$
\Exp    \varphi_1+r_0 \Exp    \xi_1=0.
$$
Note that in the particular case where $k_0=0$ and $k_1=1$,
\begin{equation}\label{eq5}
r_0= \frac{\Exp    Q}{1-\Exp    N} = \overline{r}.
\end{equation} 
Choose $r>r_0$ as close to $r_0$ as needed  and let
$$
\psi_i=\varphi_i+ r\xi_i.
$$
We will find upper and lower bounds for the asymptotics
of
$\Prob(V>x)$ and show that they are asymptotically equivalent.

Since $k_1>0$ and $Q+Nr/k_1$
has an IRV distribution, the distribution of $k_1Q+rN$ is IRV, too. 
\\[2mm]
{\bf Upper bound.} The key is to apply  the main result of~\cite{FossZ} %(see \ref{FZ1} in the Appendix of this paper) 
to obtain the
following upper bound.
\begin{align*}\MoveEqLeft
\Prob  (V>x) \ = \
\Prob  \Bigl(\sum_{i=1}^{\tau} \varphi_i > x\Bigr) \ = \
\Prob \Bigl(\sum_{i=1}^{\tau} \psi_i > x + r 
{S}_{\tau}\Bigr)\\
&=\
\Prob \Bigl(\sum_{i=1}^{\tau} \psi_i > x-r\Bigr)\ \le\
\Prob  \Bigl(\max_{1\le k \le \tau} 
\sum_{i=1}^{k} \psi_i > x-r\Bigr)\\
&\sim \
\Exp \tau 
\Prob  (\psi_1 > x-r) \ \sim \
\Exp \tau 
\Prob  (\psi_1 > x-r+k_0) \ = \
\Exp \tau 
\Prob  (k_1Q+rN > x)%\\
% leq &
%1+\varepsilon )\Exp \tau \overline{G}(x/(d+k_1)).
\end{align*}
Here the first equivalence follows from~\cite{FossZ}, noting that   the distribution of $\psi_1$
belongs to the class $S^*$ and that~\cite{FossZ} only requires $\varphi_1, \varphi_2,\ldots$ to be i.i.d.\ 
w.r.t.\ some filtration w.r.t.\ which $\tau$ is a stopping time. For the second, we used
the long-tail property \eqref{long} of the distribution of $\psi_1$.

%In particular, we may 
% let $k_0=0$ and $k_1=1$ and 
%Assume now the IRV condition to hold and 
Let $F$ be the distribution function of $k_1Q+r_0N$.
 Then, as $x\to\infty$,  
 $$
 \overline{F}(x) \le \Prob (k_1Q+rN>x) \le 
 \Prob (rk_1Q/r_0+rN>x) \le \overline{F}(\alpha x) \le
 (1+o(1)) c(\alpha )\overline{F}(x)
 $$ 
 where $\alpha = r_0/r<1$ and $c(\alpha ) =\limsup_{y\to\infty}  \overline{F}(\alpha y)/\overline{F}(y)$.
 
 Now we assume the IRV condition to hold, let $r\downarrow r_0$ and apply
 \eqref{IRV1} to obtain the  upper bound
\begin{equation}\label{eq4}
 \Prob   (R>x) \le (1+o(1))
\Exp    \tau \Prob   (k_1Q+r_0N>x)
\end{equation}
In particular, if $k_0=0$ and $k_1=1$, then
$r_0=\overline{r}$ is as in~\eqref{eq5}. \\[2mm]
{\bf Lower bound.}
 Here we put $\psi_n = \varphi_n + r\xi_n$ where
$r$ is any positive number strictly smaller than $r_0$. 
Then the $\psi_n$ are i.i.d.\ random variables with common mean $\Exp \psi_1 >0$. 

 We have, for any fixed  $C>0$, $L>0$,   $n=1,2,\ldots$  and $x\ge 0$ that  
 \begin{equation}\label{double}
 \Prob (V>x) \ \ge \
\Prob\Bigl(\sum_{i=1}^{\tau } \psi_i >x\Bigr)\   \ge\  \sum_{i=1}^n \Prob (D_i \cap A_i)
 \end{equation}
 where
\[
 D_i \ =\  \Bigl\{ \sum_{j=1}^{i-1} |\psi_j|\le C, \tau \ge i,
\psi_i>x+C+L \Bigr\}
\quad\text{and}\quad
A_i \ =\  \bigcap_{\ell\ge 1}\Bigl\{\sum_{j=1}^\ell \psi_{i+j}\ge -L \Bigr\}. 
\]
Indeed, the first inequality in \eqref{double} holds since $S_{\tau}$ is non-positive. Next, the events $D_i$
are disjoint and, given  $D_i$, we have $\sum_1^i \psi_j >x+L$. Then, given  $D_i\cap A_i$,
we have $\sum_1^k \psi_j \ge x$ for all $k\ge i$ and, in particular, $\sum_{j=1}^{\tau}\psi_j >x$. Thus, \eqref{double} holds.

The events $\{A_i\}$ form a stationary sequence. Due to the SLLN, for any $\varepsilon >0$,  
%and any $\delta \in (0,{\mathbf E} \psi_1)$, 
one can choose
$L=L_0$ so large that  $\Prob (A_i) \ge 1-\varepsilon$. 

For this $\varepsilon$, choose $n_0$ and $C_0$ such that 
$$
\sum_{i=1}^{n_0} \Prob \Bigl(\sum_{j=1}^{i-1} |\psi_j|\le C_0, \tau \ge i\Bigr)\ \ge\ (1-\varepsilon ) \Exp \tau.
$$
Since the random variables $(\{\psi_j\}_{j<i},{\mathbf I}(\tau\le i))$ are independent of $\{\psi_j\}_{j\ge i}$, we obtain further that, for any
$\varepsilon \in (0,1)$ and for any $n\ge n_0$, $C\ge C_0$ and $L\ge L_0$,  
\begin{eqnarray*}
\Prob (V>x) &\ge &
\sum_{i=1}^n \Prob \Bigl(\sum_{j=1}^{i-1} |\psi_j|\le C, \tau \ge i\Bigr)
\Prob (\psi_i>x+C+L) \Prob (A_i) \\
&\ge &
(1-\varepsilon )^2 \Prob (\psi_1 >x+C+L) \sum_{i=1}^n \Prob (\tau \ge i)\\
&\sim &
(1-\varepsilon )^2 \Prob (\psi_1 >x) \sum_{i=1}^n \Prob (\tau \ge i),
\end{eqnarray*}
as $x\to\infty$. Here the final equivalence   follows from the long-tailedness
of   $\psi_1$. Letting first $n$ \rev{go} to infinity and then $\varepsilon$ to zero, we get $\liminf_{x\to\infty} \Prob (V>x)/\Exp \tau \Prob (\psi_1>x) \ge 1.$ Then we let $r\uparrow r_0$ and use the IRV property \eqref{IRV1}.  In the particular case
$k_0=0,k_1=1$ we obtain an asymptotic lower bound that is equivalent
to the upper bound derived above  
\end{proof} 

\begin{remark}
A slightly more intuitive approach to the lower bound is to bound $\Prob(R>x)$ below by the sum of the contributions from the
disjoint events $B_1,B_2,B_3$ where \[
B_1=B\cap\{\rb N\ >\epsilon x\},\quad B_2=B\cap\{A< \rb N\le\epsilon x\},\quad
B_3= \{\rb N\le A\}\]
with $B=\{Q+\rb N>(1+\epsilon) x\}$.
Here for large $x,A$ and small $\epsilon$,
\begin{align*}\Prob(R>x;\,B_1)\ &\sim\ \Prob(Q+\rb N>x,\,\rb N>\epsilon x)\\
\Prob(R>x;\,B_2)\ &\ge \Prob(Q>x,\rb N\le\epsilon x)\ \sim \ \Prob(Q+\rb N>x,\,\rb N\le\epsilon x)\\
\Prob(R>x;B_3)\ &\ge\ \sum_{n=0}^{A/\rb}\Prob(R_1+\cdots+R_n>x)\Prob(N=n)\\ & \ \ge\
\sum_{n=0}^{A/\rb}\Prob\bigl(\max(R_1,\ldots,R_n)>x\bigr)\Prob(N=n)
\\ & \ \sim\ \sum_{n=0}^{A/\rb}n\Prob(R>x)\Prob(N=n)\ \sim\ \Exp[N\wedge A/\rb]\Prob(R>x) \ \sim\ \nb \Prob(R>x) 
\end{align*}
We omit the full details since they are close to arguments given in Section~\ref{S:FayeMRV} for the
multivariate case.\end{remark}

%%%%%%%%%%%%%%%%%%%%%%%%%%%%%%%%%%%%%%%%%%%%%%%%%%%%%%
\section{Multivariate version}\label{S:MRV}
\setcounter{equation}{0}

\rev{The assumptions for  \eqref{30.6a} are that all $R_{m}(k)$ are independent of
the vector
\begin{equation}\label{30.6ex}\bfV(i)\ =\ \bigl(Q(i),N^{(1)}(i),\ldots,N^{(K)}(i)\bigr)\,,
\end{equation}
that they are mutually independent and that $R_{m}(k)\eqdistr R(k)$.
Recall that we are only interested in the one-dimensional
distributions of the $R(i)$. Accordingly, for a solution to \eqref{30.6a} 
we only require the validity for each fixed $i$. }

Recall that the offspring mean matrix is denoted $\bfM$ where $m_{ik}=\Exp N^{(k)}(i)$,
and that $\rho=$spr($\bfM)$; $\rho$ is the Perron-Frobenius root if $\bfM$ is irreducible which it is
not necessary to assume. \rev{No restrictions on the dependence structure of the vectors in \eqref{30.6ex} }need to be imposed for the following result to hold (but later we need MRV!):

\begin{proposition}\label{Prop:6.8a} Assume $\rho<1$. Then:\\[1mm] 
{\rm (i)} the \rev{fixed-point} problem \eqref{30.6a} has a unique non-negative solution 
with $\rb_i=\Exp R(i)<\infty$ for all $i$;\\[1mm]  {\rm (ii)} the $\rb_i=\Exp R(i)<\infty$ are given as the unique solution to the set
\begin{equation}\label{6.8dd} \rb_i\ =\ \qb_i+\sum_{k=1}^K m_{ik}\rb_k\,,\qquad i=1,\ldots,K,
\end{equation}
of linear equations.
\end{proposition}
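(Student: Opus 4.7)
The plan is to handle part (ii) together with the means of (i) through a linear-algebra computation, then produce an explicit solution by a multitype Galton--Watson tree construction, and finally deduce uniqueness via a coupling argument. Supposing that \eqref{30.6a} has a non-negative solution with finite means $\rb_i$, I would take expectations in \eqref{30.6a} and apply Wald's identity, which is legitimate because the $\{R_m(k)\}$ are independent of the vector $\bfV(i)$. This yields $\rb_i=\qb_i+\sum_k m_{ik}\rb_k$, i.e.\ $(\bfI-\bfM)\rb=\qb$ in vector form. Since $\rho=\mathrm{spr}(\bfM)<1$, the Neumann series $\sum_{n\ge 0}\bfM^n$ converges to $(\bfI-\bfM)^{-1}$, and the linear system has the unique solution $\rb=\sum_{n\ge 0}\bfM^n\qb$, which is componentwise non-negative because both $\bfM$ and $\qb$ are.

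For existence, I would construct a canonical solution as the total $Q$-weight of a $K$-type Galton--Watson tree rooted at a single type-$i$ individual. Attach to each individual $\omega$ of type $j$ an independent copy of $\bfV(j)$, so that $\omega$ carries a weight $Q_\omega=Q(j)$ and $N^{(k)}_\omega$ type-$k$ children for $k=1,\ldots,K$; the coupling within each $\bfV(j)$ is preserved. Set $R(i)=\sum_\omega Q_\omega$. The expected number of level-$n$ individuals of type $k$ equals $(\bfM^n)_{ik}$, and $\rho<1$ gives $\bfM^n\to 0$ (Gelfand's formula), so $\Exp R(i)=\sum_{n\ge 0}\sum_k (\bfM^n)_{ik}\qb_k=\rb_i<\infty$. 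Splitting the tree at the root into the subtrees rooted at the children yields the distributional identity \eqref{30.6a}, so $R(i)$ is a bona fide solution.

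For uniqueness, let $R'(i)$ be any other non-negative solution with $\Exp R'(i)<\infty$, and couple it to the tree above on a common probability space by iterating \eqref{30.6a} $n$ times using the tree's $\bfV$s. This produces the decomposition $R'(i)=T_n(i)+W_n(i)$, where $T_n(i)$ is the sum of $Q_\omega$ over individuals at levels $0,\ldots,n-1$ and $W_n(i)$ is the sum of independent copies of $R'(\cdot)$ attached to the level-$n$ individuals (with their respective types). By non-negativity, $R'(i)\ge T_n(i)$ for every $n$, and $T_n(i)\nearrow R(i)$ a.s.\ by monotone convergence. Part (ii) applied to both solutions forces $\Exp R'(i)=\Exp R(i)=\rb_i$, and $\Exp W_n(i)=\sum_k(\bfM^n)_{ik}\rb_k\to 0$, so $R'(i)=R(i)$ a.s.\ in the coupling and the distribution is unique. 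The one subtle point, which I expect to be the main obstacle, is that the possible dependence between $Q(i)$ and the $N^{(k)}(i)$ precludes a clean Laplace-transform proof; the tree coupling circumvents this by keeping each vector $\bfV(j)$ intact while making copies across individuals mutually independent.
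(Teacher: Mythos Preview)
Your argument is correct and follows essentially the same route as the paper: build the canonical solution as the total $Q$-weight of a subcritical $K$-type Galton--Watson tree, verify its mean via $(\bfI-\bfM)^{-1}=\sum_{n\ge 0}\bfM^n$, and for uniqueness compare an arbitrary finite-mean solution to the depth-$n$ truncations of the tree, using $\rho<1$ to kill the remainder. The only presentational differences are that the paper first treats the special case $Q(i)\equiv 1$ and then generalises, and that it phrases the comparison as a stochastic-ordering bound $R^{(n)}(i)\preceq_{\mathrm{st}}R(i)$ together with equality of means, whereas you write the explicit decomposition $R'(i)=T_n(i)+W_n(i)$ and send $W_n(i)\to 0$ in $L^1$; these are equivalent formulations of the same mechanism.
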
 
\begin{proof} (i) Assume first $Q(i)\equiv 1$, $i=1,\ldots,K$. \rev{The existence of a
solution to \eqref{30.6a} is then clear since we may take $R(i)$ as the total progeny
of a type $i$  ancestor in a  $K$-type Galton-Watson process where the vector of children of a type $j$ individual is distributed as $\bigl(N^{(1)}(j),\ldots, N^{(K)}(j)\bigr)$.
For uniqueness, let $\bigl(R(1),\ldots, R(K)\bigr)$ be any solution and}
consider the $K$-type Galton-Watson trees $\tree(i)$, $ i=1,\ldots,K$,  where $\tree(i)$ corresponds to an
ancestor of type $i$.
If we define $R^{(0)}(i)=1$,
\[R^{(n)}(i) \ \eqdistr\  1+ \sum_{k=1}^K\sum_{m=1}^{N^{(k)}(i)}R^{(n-1)}_{m}(k)\,,\]
\rev{with similar conventions as for \eqref{30.6a},}
then $R^{(n)}(i)$  is the  total progeny of a type $i$  ancestor under the restriction
that the depth of the tree is at most $n$. Induction easily gives that 
\rev{$R^{(n)}(i)\preceq_{{\rm st}} R(i)$ ($\preceq_{{\rm st}} \ =\ $ stochastic order) for 
each $i$. Since also $R^{(n)}(i) \preceq  R^{(n+1)}(i)$,   limits
$R^{(\infty)}(i)$ exist,   $R^{(\infty)}(i)$ must simply be the unrestricted vector of total progeny of different types,
and $ R^{(\infty)}(i)\preceq_{{\rm st}}   R(i)$. 
Assuming the $R(i)$} to have finite mean, \eqref{6.8dd} clearly holds with
$\qb_i=1$, and so the $\Delta_i=\rb_i-\Exp R^{(\infty)}(i)$ satisfy $\Delta_i=\sum_1^K m_{ik}\Delta_k$. But
$\rho<1$ implies that  $\bfI-\bfM$ is invertible so the only solution is $\Delta_i=0$  which in view of
\rev{$R^{(\infty)}(i)\preceq_{{\rm st}}  R(i)$ implies $R^{(\infty)}(i) \eqdistr  R(i)$} and the stated uniqueness when $Q(i)\equiv 1$.

For more general $Q(i)$, we  equip each individual of type $j$ in $\tree(i)$ with a weight distributed as $Q(j)$,
such that the dependence between her $Q(j)$ and her offspring vector has the given structure. The argument
is then  a straightforward generalization and application of what was done above for $Q(i)\equiv 1$.

(ii) Just take expectations in  \eqref{30.6a} and note as before that  $\bfI-\bfM$ is invertible.
\end{proof}
%We can always assume that the solution has the multitype GW representation of Example 2).

For  tail asymptotics,  we need an MRV assumption. The definition of MRV exists in some equivalent variants,
cf.\ \cite{Sid87}, \cite{Meerschaert}, \cite{Basrak}, \cite{Sid07}, but we shall use the one in polar $L_1$-coordinates
adapted to deal with several random vectors at a time as in \eqref{30.6ex}. Fix here and in the following
a reference RV tail $\Fb(x)=L(x)/x^\alpha$  on $(0,\infty)$,
for $\bfv=(v_1,\ldots, v_p)$ define $\|\bfv\|=\|\bfv\|_1=$ $|v_1|+\cdots+|v_p|$ and let $\BB=\BB_p=$ $\{\bfv:\, \|\bfv\|=1\}$.
We then say that a random vector $\bfV=(V_1,\ldots, V_p)$ %with all $V_i\ge 0$ (which is the only case coming up for us) 
satisfies \MRVF\ or has property \MRVF\  if 
$\Prob\bigl(\|\bfV\|>x\bigr)$ $\sim b\Fb(x)$ where either (1) $b=0$ or (2) $b>0$ %and the radial part
% $\|\bfV\|$ has tail equivalent to $b\Fb$ 
 and the angular part $\bfTheta_{\bffV}=\bfV/\|\bfV\|$ satisfies
\[\Prob\bigl(\bfTheta_{\bffV}\in\cdot\,\big|\,\|\bfV\|>x\bigr)\ \convdistr \ \mu \text{ as }x\to\infty\]
for some measure $\mu$   on $\BB$ (the angular measure). 
Our basic condition is then that for the given reference RV tail $\Fb(x)$ we have that\\[1mm]
(MRV) For any $i=1,\ldots,K$ the vector $\bfV(i)$ in \eqref{30.6ex} satisfies \MRVF, where
$b=b(i)>0$ for at least one $i$.\\[1mm]
Note that $F$ is the same for all $i$ but the angular measures $ \mu_i$ not necessarily so.
We also assume that the mean $\zb$ of $F$ is finite, which will ensure that all expected values coming up in the following
are finite.

\rev{Assumption  \MRVF\ implies RV of linear combinations, in particular 
marginals. More precisely (see the Appendix),}
\begin{equation}\label{30.6ax}\Prob\bigl(a_0Q(i)+a_1N^{(1)}(i)+\cdots+a_KN^{(K)}(i) >x\bigr)\ \sim\ c_i(a_0,\ldots,a_K)\Fb(x)
\end{equation} where $\displaystyle
c_i(a_0,\ldots,a_K)\ =\ b(i)\int_{{\mathcal B}}(a_0\theta_0+\cdots+a_K\theta_K)^\alpha\mu_i(\dd\theta_0,\ldots,\dd\theta_K)\,.$
\begin{theorem}\label{Th:6.8a} Assume that $\rho<1$, $\zb<\infty$ and that \emph{(MRV)} holds.
Then there are constants $d_1,\ldots,d_K$ such that
\begin{equation}\label{30.6d}
\Prob(R(i)>x)\,\sim\,d_i\Fb(x)\ \ \text{as}\ x\to\infty.
\end{equation}
Here the $d_i$  are given as the unique solution to the set
\begin{equation}\label{6.8d} d_i\ =\ c_i(1,\rb_1,\ldots,\rb_K)+\sum_{k=1}^K m_{ik}d_k\,,\qquad i=1,\ldots,K,
\end{equation}
of linear equation\rev{s} where the $\rb_i$ are as in Proposition~\ref{Prop:6.8a} and the $c_i$ as in
\eqref{30.6ax}.
\end{theorem}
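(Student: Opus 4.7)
The plan is to proceed by induction on $K$, with Theorem~\ref{Th:3.8a} providing the base case $K=1$. In that case the MRV hypothesis on $\bfV(1)=(Q(1),N^{(1)}(1))$ is precisely 2-dimensional regular variation, so condition \textbf{(C)} of Theorem~\ref{Th:3.8a} holds via (iii)(a); combining \eqref{eq10} with \eqref{30.6ax} yields $\Prob(R(1)>x)\sim c_1(1,\rb_1)\Fb(x)/(1-m_{11})$, which is exactly the unique solution $d_1=c_1(1,\rb_1)/(1-m_{11})$ of \eqref{6.8d} for $K=1$.

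For the inductive step I would implement the type-elimination procedure of \cite{Foss1,Foss2}, eliminating (say) type $K$. Since $\rho<1$ forces $m_{KK}\le\rho<1$, the sub-branching-process consisting only of type-$K$ parents and type-$K$ children is subcritical, with total progeny $T$ (starting from one seed) satisfying $\Exp T=1/(1-m_{KK})$. For each $i<K$ set $\tau(i)=\sum_{j=1}^{N^{(K)}(i)}T_j$ with $T_j$ i.i.d.\ copies of $T$, attach to the $j$th type-$K$ descendant in this cluster an i.i.d.\ copy $\bfV_j(K)$ of $\bfV(K)$, and define
\[\widetilde Q(i)\ =\ Q(i)+\sum_{j=1}^{\tau(i)}Q_j(K),\qquad \widetilde N^{(k)}(i)\ =\ N^{(k)}(i)+\sum_{j=1}^{\tau(i)}N^{(k)}_j(K),\quad k<K.\]
A pathwise expansion of the tree shows that for each $i<K$ the same $R(i)$ satisfies the $(K-1)$-type fixed-point equation $R(i)\eqdistr \widetilde Q(i)+\sum_{k<K}\sum_{m=1}^{\widetilde N^{(k)}(i)} R_m(k)$. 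The reduced mean matrix $\widetilde\bfM$ is the Schur complement of $1-m_{KK}$ in $\bfI-\bfM$, so $\rho(\widetilde\bfM)<1$ follows from $\rho(\bfM)<1$. Provided each $\widetilde{\bfV}(i)$ still satisfies \MRVF, the inductive hypothesis delivers $\Prob(R(i)>x)\sim\widetilde d_i\Fb(x)$ for $i<K$; a short algebraic check using $(1-m_{KK})\rb_K=\qb_K+\sum_{k<K}m_{Kk}\rb_k$ identifies these $\widetilde d_i$ with the $d_i$ of \eqref{6.8d}. A second, symmetric reduction eliminating some type $i_0\ne K$ then handles $\Prob(R(K)>x)$.

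The main obstacle is MRV preservation, i.e.\ verifying that $\widetilde{\bfV}(i)$ still has \MRVF. By the linear-combination characterization of MRV (the converse to \eqref{30.6ax}, cf.\ the Appendix), it is enough to identify, for every nonnegative $(a_0,\ldots,a_{K-1})$, the tail of
\[a_0\widetilde Q(i)+\sum_{k<K}a_k\widetilde N^{(k)}(i)\ =\ \Bigl(a_0Q(i)+\sum_{k<K}a_kN^{(k)}(i)\Bigr)+\sum_{j=1}^{\tau(i)}Y_j\,,\]
where $Y_j=a_0Q_j(K)+\sum_{k<K}a_kN^{(k)}_j(K)$. The summand $Y_j$ and the ``steps'' $N_j^{(K)}(K)$ that drive the cluster size $\tau(i)$ are coordinates of the same MRV vector $\bfV_j(K)$, and may have tails of identical order with arbitrary dependence, so $\sum_{j=1}^{\tau(i)}Y_j$ is precisely of the form treated by Theorem~\ref{Th:3.8a}---condition \textbf{(C)} being secured by \eqref{30.6ax} applied to $\bfV(K)$. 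The theorem then gives the tail constant as an explicit linear combination of $c_i(\cdot)$ and $c_K(\cdot)$; reassembling the resulting family of linear-combination tails into a joint angular measure on $\BB$ delivers MRV of $\widetilde{\bfV}(i)$, and matching the inductive $\widetilde d_i$ to \eqref{6.8d} is a mechanical Schur-complement calculation.
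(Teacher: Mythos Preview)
Your inductive skeleton—eliminate type $K$ and reduce to a $(K{-}1)$-type problem—is the paper's strategy. The divergence is in how you preserve MRV and how you extract the constants.

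For MRV of $\widetilde\bfV(i)$ you invoke ``the converse to \eqref{30.6ax}, cf.\ the Appendix.'' This is a genuine gap: the Appendix proves only the forward implication (MRV $\Rightarrow$ RV of linear combinations), not the converse. The converse is delicate—the paper itself, at the end of Section~\ref{S:PresMRV}, notes that the Basrak--Davis--Mikosch linear-combination characterization is only invoked under the extra hypothesis that $\alpha$ is not an even integer. The paper therefore does \emph{not} go through linear combinations. Instead it proves a direct preservation result, Proposition~\ref{Prop:5.8c}: if $(\bfT,N)$ is MRV and $\bfZ_1,\bfZ_2,\ldots$ are i.i.d.\ MRV vectors independent of $(\bfT,N)$, then $(\bfT,N,\sum_1^N\bfZ_i)$ is again MRV, the angular measure being built from a ``one big jump'' dichotomy. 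Lemma~\ref{Lemma:7.8a} then realises $\widetilde\bfV(i)$ as a linear image of a vector obtained by two successive applications of that proposition (first append the cluster size $M_1(i)$, then append the compound sum $M_2(i)$ indexed by $M_1(i)$). This works for all $\alpha$.

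Two further points where the paper is cleaner. You plan to recover \eqref{6.8d} by computing the reduced constants $\widetilde c_i,\widetilde m_{ik}$ explicitly and doing a Schur-complement match; the paper remarks at the end of Section~\ref{S:PresMRV} that the reduced angular measures are ``extremely tedious'' and, crucially, unnecessary: once \emph{existence} of limits $d_i$ is established, applying Proposition~\ref{Prop:2.8b} directly to the \emph{original} recursion \eqref{30.6a} produces \eqref{6.8d} in one line. And for $R(K)$ you propose a second reduction eliminating some $i_0\ne K$; the paper instead, with $d_1,\ldots,d_{K-1}$ already in hand, rewrites \eqref{30.6a} for $i=K$ as a single-type fixed-point problem (absorbing $Q(K)$ and the sums over $k<K$ into a new $Q^*(K)$), checks via Proposition~\ref{Prop:5.8c} that $\bigl(Q^*(K),N^{(K)}(K)\bigr)$ is MRV, and invokes Theorem~\ref{Th:3.8a} once more.
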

\noindent The proof follows in Sections~\ref{S:Outline}--\ref{S:ProofCompl}.

%%%%%%%%%%%%%%%%%%%%%%%%%%%%%%%%%%%%%%%%%%%%%%%%%%%%%%
\section{Outline of proof}\label{S:Outline}
\setcounter{equation}{0}

When $K>1$, we did not manage to find a random walk argument extending Section~\ref{S:1D}. 
Instead, we shall use a recursive procedure, going back to \cite{Foss1, Foss2} in a queueing setting, for eventually
being able to infer \eqref{30.6d}. The identification  \eqref{6.8d} of the $d_i$ then follows immediately
from the following result to be proved in Section~\ref{S:FayeMRV} (the case $p=1$ is Lemma 4.7 of \cite{Fay}):

\begin{proposition}\label{Prop:2.8b} Let $\bfN=(N_1,\ldots,N_p)$ be MRV with 
$\Prob\bigl(\|\bfN\|>x\bigr)\sim c_{\bffN}\Fb(x)$ and let the r.v.'s $Z_m^{(i)}$, $i=1,\ldots,p$, $m=1,2,\ldots$, be
independent with distribution $F_j$ for $Z_i^{(j)}$, independent of $\bfN$ 
and having finite mean $\overline z_j=\Exp Z_m^{(j)}$. Define
$S_{m}^{(j)}=Z_1^{(j)}+\cdots+Z_{m}^{(j)}$. If $\Fb_j(x)\sim c_j\Fb(x)$, then \[
\Prob\bigl(S_{N_1}^{(1)}+\cdots+S_{N_p}^{(p)}>x\bigr)\ \sim\ \Prob(\zb_1N_1+\cdots+\zb_1N_p\,>\,x)+c_0\Fb(x)\]
where $c_0\,=\,c_1\Exp N_1+\cdots+c_p\Exp N_p$ . \end{proposition}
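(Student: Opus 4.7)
I match upper and lower bounds on $\Prob(T>x)$, where $T:=\sum_j S_{N_j}^{(j)}$, by separating the two mechanisms that push $T$ above $x$: either $\bfN$ is atypically large, whence by the LLN $S_{N_j}^{(j)}\approx \overline z_j N_j$ and the MRV of $\bfN$ produces the $\Prob(\sum_j\overline z_jN_j>x)$ term; or $\bfN$ is moderate while one $Z_m^{(j)}$ is large, producing the $c_0\Fb(x)$ term by the classical single-big-jump principle for random sums.

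For the \emph{upper bound} I fix a large $A$ and split $\{T>x\}$ according to $\|\bfN\|\le A$ versus $\|\bfN\|>A$. On $\|\bfN\|\le A$, condition on $\bfN$; the standard compound-sum asymptotic (uniform in bounded $\bfn$) gives $\Prob(T>x\mid\bfN=\bfn)\sim(\sum_jc_jn_j)\Fb(x)$, bounding this piece by $(1+o(1))\Fb(x)\sum_jc_j\Exp[N_j;\,\|\bfN\|\le A]$. On $\|\bfN\|>A$, I decompose $S_{N_j}^{(j)}=\overline z_jN_j+M_{N_j}^{(j)}$ with $M^{(j)}$ centered and estimate
\[\Prob(T>x,\|\bfN\|>A)\,\le\,\Prob\bigl({\textstyle\sum_j}\overline z_jN_j>(1-\epsilon)x\bigr)+\Prob\bigl({\textstyle\sum_j}M_{N_j}^{(j)}>\epsilon x,\|\bfN\|>A\bigr).\]
The first term equals $(1+o(1))(1-\epsilon)^{-\alpha}\Prob(\sum_j\overline z_jN_j>x)$ by the MRV of $\bfN$ applied to the linear functional $\sum_j\overline z_j N_j$ (the vector analogue of \eqref{30.6ax}); the second is $o(\Fb(x))$ uniformly as $A\to\infty$ by a Fuk--Nagaev heavy-tail inequality applied conditionally on $\bfN$ and then integrated against its law.

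For the \emph{lower bound} I combine two disjoint events. The first is $\{\sum_j\overline z_jN_j>(1+\epsilon)x\}$ intersected with the LLN-control $\{|M_{N_j}^{(j)}|\le\epsilon\overline z_jN_j/p\text{ for all }j\}$, contributing $(1+o(1))(1+\epsilon)^{-\alpha}\Prob(\sum_j\overline z_jN_j>x)$; the second is a single-big-jump event where, for some $j$ and some $m\le A$, $Z_m^{(j)}>(1+\epsilon)x$ with $N_j\ge m$, $\|\bfN\|\le A$, and all other contributions are controlled, yielding $(1+o(1))\Fb(x)\sum_jc_j\Exp[N_j\wedge A;\,\|\bfN\|\le A]$. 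For small $\epsilon$ and fixed $A$ the two events are disjoint. Sending $x\to\infty$, then $\epsilon\downarrow 0$, then $A\to\infty$, the upper and lower bounds coincide with $\Prob(\sum_j\overline z_jN_j>x)+c_0\Fb(x)$.

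The main obstacle is proving $\Prob\bigl(\sum_jM_{N_j}^{(j)}>\epsilon x,\|\bfN\|>A\bigr)=o(\Fb(x))$ uniformly as $A\to\infty$: this requires a Fuk--Nagaev-type estimate whose heavy-tail contribution is dominated by $\|\bfN\|\Fb(\epsilon x)$, so that $\Exp[\|\bfN\|;\,\|\bfN\|>A]\Fb(\epsilon x)$ can be made arbitrarily small relative to $\Fb(x)$. In the one-dimensional case this is exactly Lemma~4.7 of \cite{Fay}; the multivariate extension is a bookkeeping argument once the MRV of $\bfN$ is reduced to its linear functionals.
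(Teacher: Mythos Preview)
Your plan is sound and isolates the same two contributions as the paper, but the execution differs from the paper's in a way worth noting. The paper does \emph{not} split at a fixed level $A$: it thresholds each coordinate $N_j$ at $\epsilon x$, producing $2^p$ regions (for $p=2$: $p_1,p_{21},p_{22},p_3$). On the all-small region it uses dominated convergence, with the domination coming from a corollary of Nagaev's uniform large-deviation estimate (uniformly for $k<\epsilon x$, $\Prob(S_k>x)\le d\,k\,\Fb(x)$); on the regions where some $N_j>\epsilon x$ it replaces $S_{N_j}^{(j)}$ by $\overline z_jN_j/(1-\epsilon)$ via the LLN. Everything is then reassembled into $\Prob(\sum_j\overline z_jN_j>\eta x)$ with $\eta\uparrow 1$. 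So the paper works with an $x$-dependent cut and never writes the centered walks $M^{(j)}$ explicitly.

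Your fixed-$A$ split is conceptually cleaner (no $2^p$ combinatorics), but the step you flag as the main obstacle is genuinely the delicate one, and ``Fuk--Nagaev'' is the wrong label for it. The classical Fuk--Nagaev inequality needs finite variance, which you do not have for $\alpha\in(1,2)$. What you actually need is precisely the Nagaev-type bound the paper states as Lemma~\ref{Lemma:11.7aa} (and its Corollary~\ref{Cor:2.8a}): $\Prob(M_n>y)\le C\,n\,\Fb(y)$ uniformly for $y\ge\delta n$. With that in hand your estimate works, but you must also dispose of the range $N_j>\epsilon x/\delta$ separately (bounding it by $\Prob(\|\bfN\|>\epsilon x/\delta)\sim c_{\bffN}(\delta/\epsilon)^\alpha\Fb(x)$ and sending $\delta\downarrow 0$ \emph{after} $A\to\infty$). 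Without this extra layer your claim that $\Exp[\|\bfN\|;\|\bfN\|>A]\Fb(\epsilon x)$ controls the whole term is not justified, because on $\{N_j\gtrsim x\}$ the per-$n$ bound $n\Fb(\epsilon x)$ is not available. In short: right strategy, but cite Nagaev's heavy-tailed LD result rather than Fuk--Nagaev, and add the $\delta$-layer; the paper's $\epsilon x$-threshold approach sidesteps this by never letting $n$ exceed a fixed multiple of $x$.
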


The recursion idea in \cite{Foss1, Foss2} amounts in a queueing context to let
all class $K$ customers be served first. We implement it here in the branching setting.
Consider the multitype Galton-Watson  tree
 $\tree$. 
For an ancestors of type $i<K$ and any of her daughters $m=1,\ldots,  N^{(K)}(i)$
of type $K$, consider the family tree $\tree_m(i)$ formed by $m$ and all her type $K$ descendant in \emph{direct}
line. For \rev{a vertex} $g\in\tree_m(i)$ and $k<K$, let $N^{(k)}_g(K)$ denote the number of type $k$ daughters
of $g$. 

Note that $\tree_m(i)$ is simply a one-type Galton-Watson tree with the number
of daughters distributed as $N^{(K)}(K)$ and starting from a single ancestor.
In particular, the expected size of $\tree_m(i)$ is $1/(1-m_{KK})$. We further have
\begin{align}\label{30.6aa}R(i) \ &\eqdistr\  \widetilde Q(i)+ \sum_{k=1}^{K-1}\sum_{m=1}^{\widetilde N^{(k)}(i)}R_{m}(k)\,,\quad i=1,\ldots,K-1,
\\ \intertext{where} \label{30.6aaa}
\widetilde Q(i)\ &=\ Q(i)+\sum_{m=1}^{N^{(K)}(i)}\sum_{g\in\tree_m(i)} Q_g(K)\,, \\ \label{30.6aab}
\widetilde N^{(k)}(i) \ &=\  N^{(k)}(i)+\sum_{m=1}^{N^{(K)}(i)}\sum_{g\in\tree_m(i)}N^{(k)}_g(K) \,.
\end{align}
that is, a \rev{fixed-point} problem with one type less.

\begin{example}\label{Ex:5.8a}\rm Let $K=2$ and consider the 2-type family tree in Fig.~\ref{treefig}, 
where type $i=1$ has 
green color, the type $2$ descendants of the ancestor in direct line red, and the remaining type 2 individuals blue.
The green type 1 individuals marked with a triangle are the ones that are counted as extra type 1 children
in the reduced recursion \eqref{30.6aa}.
We have $N^{(2)}(1)=2$ and if $m$ is the upper red individual 
of type 2, then $\tree_m(2)$ has size 4. Further $\sum_{g\in\tree_m(1)}N^{(1)}_g=2$, with $m$ herself and her upper
daughter each contributing with one individual.

\treefig

The offspring mean in the reduced 1-type family tree is $\widetilde m=m_{11}+m_{12}m_{21}/(1-m_{22})$.
Indeed, the first term is the expected number of original type 1 offspring of the ancestor and in the second term,
$m_{12}$ is the expected number of  type 2 offspring of the ancestor, $1/(1-m_{22})$ the size of the direct line
type 2 family tree produced by each of them, and $m_{21}$ the expected number of type 1 offspring 
of each individual in this tree. 

Since the original 2-type tree is finite, the reduced 1-type  tree must necessarily also
be so, so that $\widetilde m\le 1$. A direct verification of this is instructive. First note that  %$\widetilde m\le 1$
%is equivalent to
\[ \widetilde m\le 1\ \iff\  m_{11}-m_{11}m_{22}+m_{12}m_{21}\le 1-m_{22}\ \iff\  \text{tr}(\bfM)-\text{det}(\bfM)\le1\]
But the characteristic polynomial of the
2-type offspring mean matrix $\bfM$  is $\lambda^2-\lambda\,\text{tr}(\bfM)+\text{det}(\bfM)$.
Further the dominant eigenvalue $\rho$ of $\bfM$ satisfies $\rho< 1$ so that
\[ \text{tr}(\bfM)-\text{det}(\bfM)\ \le\ \rho\,\text{tr}(\bfM)-\text{det}(\bfM)\ =\ \rho^2\ <\ 1.\]
\end{example}

%%%%%%%%%%%%%%%%%%%%%%%%%%%%%%%%%%%%%%%%%%%%%%%%%%%%%%
\section{Proof of Proposition~\ref{Prop:2.8b}}\label{S:FayeMRV}
\setcounter{equation}{0}

We shall need  the following result of Nagaev et al.\ (see the discussion in \cite{Fay}
around  equation (4.2) there for references):
\begin{lemma}\label{Lemma:11.7aa} Let $Z_1,Z_2,\ldots$ be i.i.d.\ and RV with finite mean $\overline z$ and
define $S_k=Z_1+\cdots+Z_k$. Then for any $\delta>0$
\[ \sup_{y\ge \delta k}\Bigl|\frac{\Prob(S_k>k\zb+y)}{k\Fb(y)}-1\Bigr|\ \to\ 0,\ k\to\infty.\]
\end{lemma}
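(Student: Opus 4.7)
The plan is to prove the lemma by the classical principle of a single big jump, establishing matching upper and lower bounds of $k\Fb(y)$ uniformly on $y \geq \delta k$. First I would center by setting $X_i = Z_i - \zb$, so $\Exp X_i = 0$ and $\Prob(X_i > x) \sim \Fb(x)$, reducing the claim to $\sup_{y \geq \delta k}\bigl|\Prob(\sum_{i=1}^k X_i > y)/(k\Fb(y)) - 1\bigr| \to 0$. The constraint $y \geq \delta k$ forces two crucial uniformities as $k \to \infty$: first, $y \to \infty$, so Karamata's uniform convergence theorem gives $\Fb(cy)/\Fb(y) \to c^{-\alpha}$ uniformly for $c$ in compact subsets of $(0,\infty)$; second, $k\Fb(y) \leq k\Fb(\delta k) = L(\delta k)/(\delta^\alpha k^{\alpha-1}) \to 0$ because $\alpha > 1$, so all $O((k\Fb(y))^2)$ terms are negligible.

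For the lower bound, fix $\epsilon > 0$ small and consider the events $A_i = \{X_i > (1+\epsilon)y,\ \sum_{j \neq i} X_j > -\epsilon y\}$, each contained in $\{\sum_i X_i > y\}$. By independence, $\Prob(A_i) = \Fb((1+\epsilon)y)\Prob(S_{k-1} > -\epsilon y)$, where the second factor tends to $1$ uniformly in the range since $-\epsilon y \leq -\epsilon\delta k$ and the WLLN gives $\Prob(S_{k-1}/k > -\epsilon\delta) \to 1$. Bonferroni handles the pairwise overlap via $\binom{k}{2}\Fb(y)^2 = O((k\Fb(y))^2) = o(k\Fb(y))$, and Karamata gives $\Fb((1+\epsilon)y)/\Fb(y) \to (1+\epsilon)^{-\alpha}$; summing over $i$ and letting $\epsilon \downarrow 0$ yields $\Prob(\sum_i X_i > y) \geq k\Fb(y)(1 - o(1))$.

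For the upper bound, truncate $X_i = X_i' + X_i''$ at level $\theta y$ with $X_i' = X_i \mathbf{1}(X_i \leq \theta y)$ and $\theta \in (0,1)$ small. Decompose $\{\sum_i X_i > y\}$ into (a) exactly one $X_i > \theta y$, (b) at least two $X_i > \theta y$, and (c) all $X_i \leq \theta y$ with $\sum_i X_i' > y$. Case (b) contributes $\binom{k}{2}\Fb(\theta y)^2 = O((k\Fb(y))^2) = o(k\Fb(y))$. Case (a) splits into a ``very large jump'' part $X_i > (1-\epsilon)y$ contributing $k\Fb(y)(1 + o(1))$ via regular variation together with WLLN for the remaining $k-1$ terms, and a ``moderately large jump'' part $X_i \in (\theta y, (1-\epsilon)y]$ which forces the remaining truncated sum to exceed $\epsilon y$ and is absorbed by the same Fuk-Nagaev bound as case (c).

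The main obstacle is case (c), which I would control by the Fuk-Nagaev exponential inequality applied to the centered truncated variables $\widetilde X_i = X_i' - \Exp X_i'$, bounded by $\theta y$, with $\Exp X_i' = -\int_{\theta y}^\infty \Prob(X_1 > x)\,\mathrm{d}x = o(1)$ by Karamata and $\sum_i \Var \widetilde X_i = O(k(\theta y)^{2-\alpha} L(\theta y))$ when $1 < \alpha < 2$ (with the case $\alpha \geq 2$ following from finite variance and being strictly simpler). The asymptotic identity $y^\alpha/L(\theta y) \sim 1/(\theta^\alpha\Fb(\theta y)) \sim 1/\Fb(y)$ then yields a Fuk-Nagaev bound of order $(k\Fb(y))^{(1-\epsilon)/\theta}$, which is $o(k\Fb(y))$ precisely when $\theta < 1 - \epsilon$. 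Sending $\theta, \epsilon \downarrow 0$ in that order closes the gap between the upper and lower bounds and completes the proof.
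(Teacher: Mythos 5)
The paper gives no proof of Lemma~\ref{Lemma:11.7aa}: it is quoted as a known result of Nagaev et al., with a pointer to the discussion around equation (4.2) of \cite{Fay} for references, and is then used only through Corollary~\ref{Cor:2.8a}. So there is no in-paper argument to compare against; what you have done is reconstruct the standard proof of this classical uniform large-deviation result, and your reconstruction is sound in outline. It is precisely the route taken in the literature the paper leans on (S.~V.~Nagaev; Cline and Hsing; Borovkov; Denisov, Dieker and Shneer): lower bound by the single-big-jump events $A_i$ together with Bonferroni and the weak law for the remaining $k-1$ terms; upper bound by truncation at $\theta y$, with the two-or-more-big-jumps term of order $(k\Fb(y))^2$, the one-big-jump term giving the main contribution after Karamata, and a Fuk--Nagaev (truncated exponential moment) bound killing the no-big-jump term. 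The two uniformity mechanisms you isolate are indeed the crux: $y\ge\delta k$ forces $y\to\infty$ so the uniform convergence theorem for regularly varying tails applies, and $k\Fb(y)\le k\Fb(\delta k)\to 0$ makes every term of order $(k\Fb(y))^{1+\eta}$, $\eta>0$, negligible uniformly. A few details would need tidying in a full write-up, none structural: the centering term is $\Exp X_i'=-\theta y\,\Prob(X_1>\theta y)-\int_{\theta y}^\infty\Prob(X_1>x)\,\dd x$ (you dropped the boundary term; what is actually needed is $k|\Exp X_i'|=O\bigl(y\,k\Fb(y)\bigr)=o(y)$, which holds); at $\alpha=2$ the variance need not be finite, but the same Karamata estimate for the truncated second moment covers all $\alpha\ge 2$, so ``finite variance'' is not the right justification there; and since the lemma assumes only a finite mean, the boundary case $\alpha=1$ requires $y\Fb(y)\to 0$ (integrability of the tail) in place of your ``$\alpha>1$'' step when showing $k\Fb(\delta k)\to 0$. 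With those repairs your argument gives a complete, self-contained proof of the quoted lemma.
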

\begin{corollary}\label{Cor:2.8a} Under the assumptions of Lemma~\ref{Lemma:11.7aa},
it holds for $0<\epsilon<1/\zb$ that
\[d(F,\epsilon)\ =\ \limsup_{x\to\infty}\sup_{k<\epsilon x}\frac{\Prob(S_{k}>x)}{k\Fb(x)}\ < \ \infty\]
\end{corollary}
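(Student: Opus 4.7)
The plan is to split the supremum at a threshold $k_0$ and bound each piece separately. Set $y:=x-k\zb$; for $k<\epsilon x$ with $\epsilon\zb<1$ one has $y>x(1-\epsilon\zb)>0$ and $y/k>(1-\epsilon\zb)/\epsilon=:\delta>0$, so Lemma~\ref{Lemma:11.7aa} applies with this $\delta$. Given any $\eta>0$, it produces $k_0$ such that
\[ \Prob(S_k>x)\ =\ \Prob(S_k>k\zb+y)\ \le\ (1+\eta)\,k\,\Fb(y)\qquad\text{for all } k\ge k_0,\ k<\epsilon x.\]

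The next step is to compare $\Fb(y)$ with $\Fb(x)$. Since $y/x\in[1-\epsilon\zb,\,1]$, regular variation of $\Fb(x)=L(x)/x^\alpha$ (equivalently, uniform convergence of the slowly varying part $L$ on compact subsets of $(0,\infty)$) yields
\[\limsup_{x\to\infty}\,\sup_{k<\epsilon x}\,\frac{\Fb(y)}{\Fb(x)}\ \le\ (1-\epsilon\zb)^{-\alpha}.\]
Combining the two displays gives
\[\limsup_{x\to\infty}\,\sup_{k_0\le k<\epsilon x}\,\frac{\Prob(S_k>x)}{k\Fb(x)}\ \le\ (1+\eta)(1-\epsilon\zb)^{-\alpha}\ <\ \infty.\]

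The remaining range $1\le k<k_0$ I would handle directly by subexponentiality of $F$, which is a consequence of $F\in{\cal RV}$: for each fixed $k$, $\Prob(S_k>x)\sim k\Fb(x)$ as $x\to\infty$, and since only finitely many values of $k$ are involved,
\[\limsup_{x\to\infty}\,\sup_{1\le k<k_0}\,\frac{\Prob(S_k>x)}{k\Fb(x)}\ =\ 1.\]
Adding the two bounds yields $d(F,\epsilon)<\infty$, as required.

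The only mildly technical point is the uniform comparison of $\Fb(y)$ and $\Fb(x)$ for $k<\epsilon x$, which needs the ratio to behave well uniformly as $y/x$ varies through the compact subinterval $[1-\epsilon\zb,1]$ of $(0,1]$; this is exactly Karamata's uniformity theorem for slowly varying functions. Apart from that, the argument is just bookkeeping around Lemma~\ref{Lemma:11.7aa} and subexponentiality.
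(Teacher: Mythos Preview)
Your proof is correct and follows essentially the same route as the paper: set $\delta=(1-\epsilon\zb)/\epsilon$, note that $y=x-k\zb\ge\delta k$ whenever $k<\epsilon x$, invoke Lemma~\ref{Lemma:11.7aa}, and then compare $\Fb(y)$ with $\Fb(x)$ via regular variation. Two minor remarks: (i) the paper does not split explicitly at a threshold $k_0$ but simply absorbs the finitely many small $k$ into the constant, so your treatment is a touch more careful there; (ii) for the comparison of $\Fb(y)$ with $\Fb(x)$ you don't need the full Karamata uniformity theorem---since $\Fb$ is nonincreasing and $y\ge x(1-\epsilon\zb)$, one has $\Fb(y)\le\Fb\bigl(x(1-\epsilon\zb)\bigr)\sim(1-\epsilon\zb)^{-\alpha}\Fb(x)$, which is the simpler argument the paper uses.
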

\begin{proof} Define $\delta=(1-\epsilon\zb)/\epsilon$. We can write $x=k\zb+y$ where
\[y\ =\ y(x,k)\ =\ x-k\zb\ \ge\ x(1-\epsilon \zb)\ =\ x\epsilon\delta\ \ge\ \delta k\,.\] 
Lemma~\ref{Lemma:11.7aa} therefore gives that for all large $x$ we can bound $\Prob(S_{k}>x)$
by $Ck\Fb(y)$ where $C$ does not depend on $x$. Now just note that by RV
\[\Fb(y) \ \le\ \Fb(x\epsilon\delta)\ \sim (\epsilon\delta)^{-\alpha}\Fb(x)\,.\]%\qedhere\]
\end{proof}

\begin{proof}[Proof of Proposition~\ref{Prop:2.8b}]
For ease of exposition, we start by the case $p=2$. We split the probability in question into four parts
\begin{align*}p_1(x)\ &=\ \Prob\bigl(S_{N_1}^{(1)}+S_{N_2}^{(2)}>x,\, N_1\le \epsilon x,\, N_2\le \epsilon x\bigr)\\
p_{21}(x)\ &=\ \Prob\bigl(S_{N_1}^{(1)}+S_{N_2}^{(2)}>x,\, N_1> \epsilon x,\, N_2\le \epsilon x\bigr)\\
p_{22}(x)\ &=\ \Prob\bigl(S_{N_1}^{(1)}+S_{N_2}^{(2)}>x,\, N_1\le \epsilon x,\, N_2> \epsilon x\bigr)\\
p_3(x)\ &=\ \Prob\bigl(S_{N_1}^{(1)}+S_{N_2}^{(2)}>x,\, N_1> \epsilon x,\, N_2> \epsilon x\bigr)
\\ \intertext{Here}
p_1(x)\ &=\ \sum_{k_1,k_2=0}^{\epsilon x}\Prob\bigl(S_{k_1}^{(1)}+S_{k_2}^{(2)}>x\bigr)
\Prob(N_1=k_1,N_2=k_2)
\end{align*}
Since $S_{k_1}^{(1)},\,S_{k_2}^{(2)}$ are independent, we have by standard RV theory that
\[\Prob\bigl(S_{k_1}^{(1)}+S_{k_2}^{(2)}>x\bigr)\ \sim\ (k_1c_1+k_2c_2)\Fb(x)\]
as $x\to\infty$. Further Corollary~\ref{Cor:2.8a} gives that for $k_1,k_2\le \epsilon x$ and all large $x$ we have
\begin{align*}\Prob\bigl(S_{k_1}^{(1)}+S_{k_2}^{(2)}>x\bigr)\ &\le\ 
\Prob\bigl(S_{k_1}^{(1)}>x/2\bigr)+\Prob\bigl(S_{k_2}^{(2)}>x/2\bigr)\\ &\le\ 
2\bigl(d(F_1,2\epsilon)k_1+d(F_2,2\epsilon)k_2\bigr)\Fb(x)\,.
\end{align*}
%which is bounded by a constant independent of $k_1,k_2$ times $\Fb(x)$ according to Corollary~\ref{Cor:2.8a}.
Hence by dominated convergence
\[\frac{p_1(x)}{\Fb(x)}\ \to \ \sum_{k_1,k_2=0}^{\infty}(k_1c_1+k_2c_2)
\Prob(N_1=k_1,N_2=k_2)\ =\ c_1\Exp N_1+c_2\Exp N_2\,.\]

For $p_3(x)$, denote by $A_j(m)$ the event that $S_{k_j}^{(j)}/k_j\le\zb_j/(1-\epsilon)$
for all $k_j>m$. Then  by the LNN there are  constants $r(m)$ converging to 0 as $m\to\infty$ 
such that $\Prob \rev{\bigl(}A_j(m)^\compl\rev{\bigr)} \le r(m)$ for $j=1,2$. It follows that
\begin{align*}%\MoveEqLeft 
p_3(x)\ &\le \ \bigl(\Prob \rev{\bigl(}A_1(\epsilon x)^\compl\rev{\bigl)}+
\Prob\rev{\bigl(} A_2(\epsilon x)^\compl\rev{\bigl)}\bigr)
\Prob(N_1> \epsilon x,\, N_2> \epsilon x)\\ &
+\Prob\bigl(S_{N_1}^{(1)}+S_{N_2}^{(2)}>x,\, N_1> \epsilon x,\, N_2> \epsilon x,A_1(\epsilon x),A_2(\epsilon x)\bigr)
\\ &\le r(\epsilon x)\Oh\bigl(\Fb(x)\bigr)+\Prob\bigl((\zb_1N_1+\zb_2N_2)/(1-\epsilon)>x,\, N_1> \epsilon x,\, N_2> \epsilon x)\\ &\le\ \oh\bigl(\Fb(x)\bigr) \Prob\bigl((\zb_1N_1+\zb_2N_2)>\eta x,\, N_1> \epsilon x,\, N_2> \epsilon x)
\end{align*}
\rev{as $x\to\infty$},
where $\eta<1-\epsilon$ will be specified later.

For $p_{21}(x)$, we write $p_{21}(x)=p_{21}'(x)+p_{21}''(x)$ where
\begin{align*}
p_{21}'(x)\ &=\ \Prob\bigl(S_{N_1}^{(1)}+S_{N_2}^{(2)}>x,S_{N_2}^{(2)}\le \gamma x,\, N_1> \epsilon x,\, N_2\le \epsilon x\bigr)\\
p_{21}''(x)\ &=\ \Prob\bigl(S_{N_1}^{(1)}+S_{N_2}^{(2)}>x,S_{N_2}^{(2)}> \gamma x,\, N_1> \epsilon x,\, N_2\le \epsilon x\bigr)
\end{align*}
with $\gamma=2\epsilon\zb_2$. Here
\begin{align*}
p_{21}''(x)\ &\le \ \Prob\bigl(S_{N_1}^{(1)}+S_{\epsilon x}^{(2)}>x,S_{\epsilon x}^{(2)}> \gamma x,\, 
N_1> \epsilon x,\, N_2\le \epsilon x\bigr)\\ &\le\ 
\Prob\bigl(S_{ \epsilon x}^{(2)}> \gamma x,\, 
N_1> \epsilon x\bigr)\ =\ \Prob\bigl(S_{ \epsilon x}^{(2)}> \gamma x\Bigr)\, 
\Prob(N_1> \epsilon x)\\ &=\ \oh(1)\Oh\bigl(\Fb(x)\bigr)\ =\ \oh\bigl(\Fb(x)\bigr)\,,
\end{align*}
using the LLN in the fourth step. Further as in the estimates above
\begin{align*}
p_{21}'(x)\ &\le\ \Prob\bigl(S_{N_1}^{(1)}>x(1-\gamma),\, N_1> \epsilon x,\, N_2\le \epsilon x\bigr)\\ &\le \ 
\oh\bigl(\Fb(x)\bigr)\ +\  \Prob\bigl(\zb_1N_1>x(1-\gamma)(1-\epsilon),\, N_1> \epsilon x,\, N_2\le \epsilon x\bigr)
\\ &\le \  \Prob\bigl(\zb_1N_1+\zb_2N_2>x(1-\gamma)(1-\epsilon),\, N_1> \epsilon x,\, N_2\le \epsilon x\bigr)
\end{align*}

We can now finally put the above estimates together. For ease of notation, write $\eta=\eta(\epsilon)=
(1-\gamma)(1-\epsilon)$ and note that $\eta\uparrow 1$ as $\epsilon\downarrow 0$. Using a similar
estimate for  $p_{12}(x)$ as for  $p_{21}(x)$ and noting that
\[\Prob\bigl(\zb_1N_1+\zb_2N_2>\eta x,\, N_1\le \epsilon x,\, N_2\le \epsilon x\bigr)\ =\ 0\]
for $\epsilon$ small enough, we get
\begin{align*} \MoveEqLeft \limsup_{x\to\infty}\frac{1}{\Fb(x)}\Prob\bigl(S_{N_1}^{(1)}+S_{N_2}^{(2)}>x\bigr)\\ &
=\ c_1\Exp N_1+c_2\Exp N_2+ \limsup_{x\to\infty}\frac{1}{\Fb(x)}\Prob\bigl(\zb_1N_1+\zb_2N_2>\eta x\bigr)\\
&=\ c_1\Exp N_1+c_2\Exp N_2 +c(\zb_1,\zb_2) \limsup_{x\to\infty}\frac{\Fb(\eta x)}{\Fb(x)}\\
&=\ c_1\Exp N_1+c_2\Exp N_2 +c(\zb_1,\zb_2)\frac{1}{\eta^\alpha}
\end{align*}
Letting $\epsilon\downarrow 0$ gives that the $\limsup$ is bounded by $c_0\rev{+}c(\zb_1,\zb_2)$. Similar estimates
for the $\liminf$ complete the proof for $p=2$.

If $p>2$, the only essential difference is that $p_{21}(x),p_{22}(x)$ need to be replaced by the $2^p-2$ terms
corresponding to all combinations of some $N_i$ being $\le \epsilon x$ and the others $>\epsilon x$,
with the two exceptions being the ones where either all are $\le \epsilon x$ or all are $>\epsilon x$.
However, to each of these similar estimates as the above ones for $p_{21}(x)$ apply.
\end{proof}

%%%%%%%%%%%%%%%%%%%%%%%%%%%%%%%%%%%%%%%%%%%%%%%%%%%%%%
\section{Preservation of MRV under sum operations}\label{S:PresMRV}
\setcounter{equation}{0}

Before giving our main auxiliary result, Proposition~\ref{Prop:5.8c}, it is instructive to recall two extremely simple example of MRV. The first is two i.i.d.\ RV$(F)$ r.v.'s $X_1,X_2$, where a big value of the $X_1+X_2$ 
can only occur if one variable is big and the other small, which gives MRV with the angular measure
concentrated on the points $(1,0),\,(0,1)\in \BB_2$ with mass 1/2 for each. Slightly more complicated:
\begin{proposition}\label{Prop:5.8b}
Let $N,Z,Z_1,Z_2,\ldots$ be non-negative r.v.'s such that $N\in\nat$, $Z,Z_1,Z_2,\ldots$ are i.i.d., non-negative
and independent of $N$. Assume that $\Prob(N>x)\sim c_N\Fb(x)$, $\Prob(Z>x)\sim c_Z\Fb(x)$ 
for some RV \rev{tail $\Fb(x)=L(x)/x^\alpha$} and write $S=\sum_1^NZ_i$, $\nb=\Exp N$, $\zb=\Exp Z$, 
where $c_N+c_Z>0$. Then:\\[1mm]
\emph{(i)} $\Prob(S>x)\sim (c_N \zb^{\alpha}+c_Z\nb)\Fb(x)$;\\[1mm]
\emph{(ii)} The random vector $(N,S)$ is \emph{MRV} with \[\Prob\bigl(\|(N,S)\|>x\bigr)\sim c_{N,S}\Fb(x)\quad
\text{where}\quad c_{N,S}=c_N (1+\zb^{\alpha})+c_Z\nb\]
and angular measure $\mu_{N,S}$ concentrated on the points $\bfb_1=\bigl(1/(1+\zb),\zb/(1+\zb)\bigr)$ and $\bfb_2=(0,1)$
with \[\mu_{N,S}(\bfb_1)=\frac{c_N}{c_N+c_Z\nb}\,,\qquad \mu_{N,S}(\bfb_2)=\frac{c_Z\nb}{c_N+c_Z\nb}\,.\]
\end{proposition}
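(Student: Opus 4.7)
The strategy is to isolate the two asymptotically disjoint mechanisms producing a large value of $\|(N,S)\|$: \textbf{(A)} $N$ itself is large, in which case the law of large numbers forces $S\approx \zb N$ and the angular part of $(N,S)$ points toward $\bfb_1=(1,\zb)/(1+\zb)$; and \textbf{(B)} $N$ remains moderate but a single summand $Z_i$ is exceptionally large, sending the angular part toward $\bfb_2=(0,1)$. These contribute additively to the tail of the norm and produce two atoms of the angular measure, with weights proportional to their respective tail contributions.

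For part (i), the shortest route is to apply Proposition~\ref{Prop:2.8b} with $p=1$, taking $N_1=N$, $Z_m^{(1)}=Z_m$, $c_1=c_Z$ and $c_{\bffN}=c_N$. The conclusion reads $\Prob(S>x)\sim \Prob(\zb N>x)+c_Z \nb\,\Fb(x)$, which by regular variation of $\Prob(N>x)$ becomes the stated $(c_N \zb^{\alpha}+c_Z \nb)\Fb(x)$.

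For part (ii), I would first compute $\Prob(\|(N,S)\|_1>x)=\Prob(N+S>x)$ by fixing small $\epsilon>0$ and splitting according to $\{N>\epsilon x\}$ versus $\{N\le\epsilon x\}$. On $\{N>\epsilon x\}$, uniform LLN estimates (in the spirit of Lemma~\ref{Lemma:11.7aa}) show $S/N\to\zb$ in probability, so $N+S$ is asymptotically $(1+\zb)N$, giving a tail contribution $\sim c_N(1+\zb)^{\alpha}\Fb(x)$ by regular variation. On $\{N\le\epsilon x\}$, $N$ is negligible in comparison to $x$ and the single-big-jump principle (via Corollary~\ref{Cor:2.8a}, conditioning on $N=k$ and summing) yields a contribution $\sim c_Z \nb\,\Fb(x)$, as only one of the $Z_i$ can be responsible for $S>x(1-\epsilon)$ under this constraint. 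Letting $\epsilon\downarrow 0$, the two asymptotically disjoint contributions sum to $c_{N,S}\Fb(x)$.

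For the angular convergence, I would verify that, conditionally on $\|\bfV\|>x$, mechanism (A) occurs with the claimed limiting conditional probability and then $(N,S)/\|(N,S)\|\to\bfb_1$ in probability by the LLN on $\{N>\epsilon x\}$, while mechanism (B) occurs with the complementary limiting probability and then $(N,S)/\|(N,S)\|\to\bfb_2$ because the single large $Z_i$ dwarfs $N$. To rule out escape of mass into $\BB_2\setminus\{\bfb_1,\bfb_2\}$, one bounds the complementary event via the same (A)/(B) decomposition at a slightly coarser scale and shows it is $\oh\bigl(\Fb(x)\bigr)$. The main technical obstacle is making the LLN step uniform in $N>\epsilon x$ while retaining quantitative control of the tail errors; this is precisely the bookkeeping already carried out for Proposition~\ref{Prop:2.8b} in Section~\ref{S:FayeMRV}, and those estimates transport here with only cosmetic changes.
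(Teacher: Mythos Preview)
Your approach is essentially the same as the paper's: part~(i) is cited there as Lemma~4.7 of~\cite{Fay} (i.e.\ the $p=1$ case of Proposition~\ref{Prop:2.8b}), and for part~(ii) the paper invokes exactly your two-mechanism decomposition---either $N\zb>x$, giving the atom at $\bfb_1$, or $N\le\epsilon x$ with a single large $Z_i$, giving the atom at $\bfb_2$---only stated far more tersely. Incidentally, your mechanism-(A) contribution $c_N(1+\zb)^\alpha\Fb(x)$ to the norm tail is the correct one; the printed $c_{N,S}=c_N(1+\zb^{\alpha})+c_Z\nb$ and the stated angular weights appear to be typos in the paper.
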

\begin{proof} Part (i) is Lemma 4.7 of~\cite{Fay} (see also~\cite{Denisov}). The proof in~\cite{Fay} also shows that is $S>x$, then
either approximately $N\zb>x$, occuring w.p.\ $c_N\Fb(x/\zb)\sim c_N \zb^{\alpha}\Fb(x)$, or $N\le \epsilon x$ and  $Z_i>x$,
 occuring w.p.\ $c_Z\Exp[N\wedge \epsilon x]\Fb(x)$. The first possibility is what gives the atom of $\mu_{N,S}$ at $b_1$
 and the second  gives the atom  at $b_2$ since $\Exp[N\wedge \epsilon x]\uparrow\nb$.
 \end{proof}
 
\begin{proposition}\label{Prop:5.8c}
Let $\bfV=(\bfT,N)\in[0,\infty)^{p}\times\nat$ satisfy \MRVF,
let $\bfZ,\bfZ_1,\bfZ_2,\ldots$\,$\in[0,\infty)^q$ be i.i.d.\ and independent of $(\bfT,N)$ and 
satisfying \MRVF, and
define $\bfS=\sum_1^N\bfZ_i$. Then $\bfV^*=(\bfT,N,\bfS)$ satisfies \MRVF.
\end{proposition}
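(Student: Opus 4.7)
The plan is to extend the approach of Proposition~\ref{Prop:5.8b} to the vector-valued setting. Let $\zb=\Exp\bfZ_1\in[0,\infty)^q$, denote by $\mu_1$ the angular measure of $(\bfT,N)$ and by $\mu_2$ that of $\bfZ$, and set $\Phi(\bft,n)=(\bft,n,n\zb)$ so that, by the SLLN, $\bfV^*=(\bfT,N,\bfS)\approx\Phi(\bfT,N)$ whenever $N$ is large. The single-big-jump heuristic then says that a large value of $\|\bfV^*\|$ arises in exactly one of two essentially disjoint ways: \emph{(a)} $(\bfT,N)$ is itself large, and $\bfV^*\approx\Phi(\bfT,N)$; or \emph{(b)} $(\bfT,N)$ is of typical size but a single summand $\bfZ_{i^*}$ is atypically large, so $\bfV^*\approx(0,0,\bfZ_{i^*})$ at leading order.

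\emph{Step 1 (tail asymptotics).} I would decompose $\Prob(\|\bfV^*\|>x)$ on $\{N>\epsilon x\}$ and $\{N\le\epsilon x\}$. Because the norm is $\|\cdot\|_1$ and all coordinates are nonnegative, $\|\bfS\|=\sum_{i=1}^N\|\bfZ_i\|$, so on $\{N>\epsilon x\}$ the SLLN gives $\|\bfS\|=N\|\zb\|+\oh(N)$ uniformly in the regime; hence $\|\bfV^*\|=\|\bfT\|+(1+\|\zb\|)N+\oh(x)$ is, up to lower-order terms, a positive linear combination of the coordinates of $(\bfT,N)$, which is RV$(F)$ with an explicit coefficient by the linear-combination consequence~\eqref{30.6ax} of (MRV). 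On $\{N\le\epsilon x\}$, condition on $N=k$ and apply Corollary~\ref{Cor:2.8a} to the scalar i.i.d.\ sum $\sum_{i=1}^k\|\bfZ_i\|$ (whose summands are RV$(F)$ by MRV of $\bfZ$) to obtain $\Prob(\|\bfS\|>x\mid N=k)\sim kb_{\bfZ}\Fb(x)$ together with a domination permitting passage to the limit inside the sum over $k$; this yields a contribution of order $b_{\bfZ}\nb\Fb(x)$. Overlap and remainder terms are $\oh(\Fb(x))$ by arguments exactly parallel to the estimates for $p_3$ and $p_{21}''$ in the proof of Proposition~\ref{Prop:2.8b}.

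\emph{Step 2 (angular convergence).} Using the same decomposition, I would show that conditionally on each regime the angular part $\bfV^*/\|\bfV^*\|$ converges weakly. In regime \emph{(a)}, $\Phi$ is positively homogeneous, so $\bfV^*/\|\bfV^*\|$ is close to $\Phi(\bftheta)/\|\Phi(\bftheta)\|$ with $\bftheta=(\bfT,N)/\|(\bfT,N)\|$; the latter converges in distribution conditionally on $\|(\bfT,N)\|>x$ to $\mu_1$ by (MRV), and $\bftheta\mapsto\Phi(\bftheta)/\|\Phi(\bftheta)\|$ is continuous, so the image measure gives one component of the limiting angular measure $\mu^*$. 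In regime \emph{(b)}, exactly one $\bfZ_{i^*}$ is of order $x$ while $(\bfT,N)$ and the other $\bfZ_i$ are $\oh(x)$, whence $\bfV^*/\|\bfV^*\|$ concentrates on the subspace $\{(0,0)\}\times\BB_q$ with a limit distribution obtained by pushing $\mu_2$ forward along the embedding $\bftheta_{\bfZ}\mapsto(0,0,\bftheta_{\bfZ})$. Weighting the two components by the masses computed in Step 1 gives $\mu^*$ and completes the verification of \MRVF\ for $\bfV^*$.

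\emph{Main obstacle.} The most delicate point is to quantify the SLLN error in regime \emph{(a)} uniformly over the relevant range of $N$, so that multiplying by the tail of $\|(\bfT,N)\|$ does not spoil the asymptotics, and simultaneously to rule out an overlap zone in which $(\bfT,N)$ is moderately large \emph{and} some $\bfZ_i$ is also atypically large (which could in principle contribute at order $\Fb(x)$). A truncation $\{N\le Mx\}$ vs.\ $\{N>Mx\}$ together with the joint tail bound for $(\bfT,N)$ and the domination in Corollary~\ref{Cor:2.8a} should handle this, after which the remaining bookkeeping is entirely parallel to Section~\ref{S:FayeMRV}.
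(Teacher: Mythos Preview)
Your overall strategy---the two-regime single-big-jump decomposition, Corollary~\ref{Cor:2.8a} for the small-$N$ part, and the pushforward of the angular measure via the homogeneous map $\Phi$---is exactly the approach the paper takes. But the specific split you propose, $\{N>\epsilon x\}$ versus $\{N\le\epsilon x\}$, has a genuine gap: it loses the case where $\|\bfT\|$ is of order $x$ while $N$ is $\oh(x)$. In that situation $\|\bfV^*\|\ge\|\bfT\|>x$ without $N$ being large and without any single $\bfZ_i$ being large, so it is a leading-order contribution to both the radial tail and the angular measure (the limit direction lies in $\BB_p\times\{0\}\times\{0\}$, not in the image of your $\Phi$ restricted to $\{\theta_N>0\}$ and not in $\{0\}\times\{0\}\times\BB_q$). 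Your Step~1 on $\{N\le\epsilon x\}$ only picks up the contribution from $\|\bfS\|>x$, i.e.\ a big $\bfZ_i$, and your Step~1 on $\{N>\epsilon x\}$ gives only $\Prob\bigl(\|\bfT\|+(1+\|\zb\|)N>x,\,N>\epsilon x\bigr)$, which is strictly smaller than $\Prob\bigl(\|\bfT\|+(1+\|\zb\|)N>x\bigr)$ by an amount of order $\Fb(x)$. So the two pieces do not add up to the correct constant.

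The fix is to split on $\|\bfV\|=\|(\bfT,N)\|>\epsilon x$ versus $\|\bfV\|\le\epsilon x$ rather than on $N$ alone; this is precisely what the paper does. On $\{\|\bfV\|\le\epsilon x\}$ both $\|\bfT\|$ and $N$ are small, so only the big-$\bfZ_i$ mechanism survives and your use of Corollary~\ref{Cor:2.8a} goes through unchanged. On $\{\|\bfV\|>\epsilon x\}$ the paper avoids your uniform-SLLN worry by a neat observation: $(\bft,n)\mapsto\|\bft\|+n+n\|\zb\|=\|\Phi(\bft,n)\|$ is itself a norm on $\RL^{p+1}$, and since the MRV property is norm-independent, the conditional law of $(\bfT,N)/\|(\bfT,N)\|$ given $\|\bfT\|+N(1+\|\zb\|)>x$ converges directly to some $\mu'$. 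This replaces your SLLN step entirely and delivers both the radial constant $c'$ and the angular component in regime~(a) in one stroke; the resulting angular measure is exactly the pushforward of $\mu'$ under your map $\bftheta\mapsto\Phi(\bftheta)/\|\Phi(\bftheta)\|$, combined (as you wrote) with the embedded $\mu_2$ from regime~(b).
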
  
\begin{proof} 
%We first note that the proof of Proposition~\ref{Prop:5.8b} extends to conclude that $(N,\bfS)$ is MRV
%$(c_{N,\bfS},\mu_{N,\bfS})$ with xxx
Let $\overline \bfz\in[0,\infty)^q$ be the mean of $\bfZ$.
Similar arguments as in Section~\ref{S:FayeMRV} show that $\|V^*\|>x$ basically occurs when either 
$\|\bfT\|+N+N\|\overline\bfz\|>x$ or when $ \|\bfV\|\le\epsilon x$ and some $\|\bfZ_i\|>x$.
The probabilities of these events are approximately of the form $c'\Fb(x)$ and $c''\Fb(x)$, so the radial part
of $\bfV^*$ is RV with asymptotic tail $c_{\bffV^*}\Fb(x)$ where $c_{\bffV^*}=c'+c''$.
Now
\[\Prob\Bigl(\frac{(\bfT,N)}{\|(\bfT,N)\|}\in \cdot\,\Big|\, \|\bfT\|+N+N\|\overline\bfz\|>x\Big)\ \to\ \mu'\]
for some probability measure $\mu'$ on the $(p+1)$-dimensional  unit sphere $\BB_{p+1}$; this follows since
$\|\bfT\|+N+N\|\overline\bfz\|$ is a norm and the MRV property of a vector is independent of the choice of norm.
Letting $\delta'_0$ be Dirac measure at $(0,\ldots,0)\in\RL^q$, $\delta''_0$ be Dirac measure at $(0,\ldots,0)\in\RL^{p+1} $
and $\mu''=\mu_{\bfZ}$ \rev{the angular measure of $\bfZ$}, we obtain the desired   conclusion with $c_{\bffV^*}=c'+c''$ and \rev{the angular measure of $\bfV^*$
given by}
\[\mu_{\bffV^*}\ =\ \frac{c'}{c'+c''}\,\mu'\otimes\delta''_0+\frac{c''}{c'+c''}\,\delta'_0\otimes\mu''\]
\end{proof}

In calculations to follow (Lemma~\ref{Lemma:7.8a}), extending some $\bfV$ to some $\bfV^*$ in a number of steps,  expressions for $c_{\bffV^*},\mu_{\bffV^*}$  can be deduced along the lines of the proof of
Propositions~\ref{Prop:5.8b}--\ref{Prop:5.8c} but the expression and details become extremely tedious. Fortunately, they won't be needed and are therefore omitted --- all that matters is existence.
If $\alpha$ is not an even integer,
the MRV alone of $\bfV^*$ can alternatively (and slightly easier) be obtained  from
Theorem 1.1(iv) of~\cite{Basrak}, stating that by non-negativity it suffices to verify MRV of any linear combination.

%%%%%%%%%%%%%%%%%%%%%%%%%%%%%%%%%%%%%%%%%%%%%%%%%%%%%%
\section{Proof of Theorem~\ref{Th:6.8a} completed}\label{S:ProofCompl}
\setcounter{equation}{0}

\begin{lemma}\label{Lemma:7.8a} In the setting of \eqref{30.6aa},
the random vector \[\bfV^*(i)\ =\ \bigl(\widetilde Q(i), \widetilde N^{(1)}(i),\ldots,\widetilde N^{(K-1)}(i)\bigr)\] satisfies \MRVF\ 
for all $i$.
\end{lemma}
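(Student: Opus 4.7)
The strategy is to write
\[
\bfV^*(i) \;=\; \bigl(Q(i),N^{(1)}(i),\ldots,N^{(K-1)}(i)\bigr)\;+\;\sum_{m=1}^{N^{(K)}(i)}\boldsymbol{W}_m,
\]
where $\boldsymbol{W}_m=\sum_{g\in\tree_m(i)}\bigl(Q_g(K),N^{(1)}_g(K),\ldots,N^{(K-1)}_g(K)\bigr)$ is the vector of accumulated type-$K$ charges in the subtree $\tree_m(i)$, and then to derive \MRVF{} for $\bfV^*(i)$ from Proposition~\ref{Prop:5.8c} once the subtree-sum vector $\boldsymbol{W}:=\boldsymbol{W}_1$ is itself known to be \MRVF. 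Since the $\tree_m(i)$ are i.i.d.\ monotype Galton--Watson trees (offspring law $N^{(K)}(K)$) carrying i.i.d.\ vertex marks distributed as the corresponding coordinates of $\bfV(K)$, the $\boldsymbol{W}_m$ are i.i.d.\ and independent of $\bfV(i)$. Decomposing $\tree_1(i)$ at its root yields the vector fixed-point equation
\[
\boldsymbol{W}\;\eqdistr\;\boldsymbol{U}\;+\;\sum_{j=1}^{N^{(K)}(K)}\boldsymbol{W}_j,\qquad \boldsymbol{U}=\bigl(Q(K),N^{(1)}(K),\ldots,N^{(K-1)}(K)\bigr),
\]
with $(\boldsymbol{U},N^{(K)}(K))\eqdistr\bfV(K)$ and $\boldsymbol{W}_j$ i.i.d.\ copies of $\boldsymbol{W}$ independent of $\bfV(K)$. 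Because $\bfM$ is non-negative, $m_{KK}\le\rho<1$, so $\boldsymbol{W}$ has finite mean.

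The hard step is establishing \MRVF{} for $\boldsymbol{W}$. I would first work at the level of non-negative scalar projections: for any $a\in\RL^K$ with $a\ge 0$, the scalar $a\cdot\boldsymbol{W}$ satisfies
\[
a\cdot\boldsymbol{W}\;\eqdistr\;a\cdot\boldsymbol{U}\;+\;\sum_{j=1}^{N^{(K)}(K)}(a\cdot\boldsymbol{W})_j,
\]
which is precisely the scalar recursion \eqref{eq1} with $Q=a\cdot\boldsymbol{U}$ and $N=N^{(K)}(K)$. Assumption (MRV) together with \eqref{30.6ax} makes each $a\cdot\boldsymbol{U}+cN^{(K)}(K)$ regularly varying (hence IRV), so condition~(C) of Theorem~\ref{Th:3.8a} holds and that theorem yields $\Prob(a\cdot\boldsymbol{W}>x)\sim c'(a)\Fb(x)$ for an $\alpha$-homogeneous, non-negative $c'(a)$ that is explicitly expressible through $\mu_K$ and $\overline{a\cdot\boldsymbol{W}}=\Exp(a\cdot\boldsymbol{U})/(1-m_{KK})$.

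When $\alpha\notin 2\nat$, Theorem~1.1(iv) of~\cite{Basrak} upgrades this regular variation of every non-negative linear combination into \MRVF{} of $\boldsymbol{W}$. For the remaining values of $\alpha$, the angular measure $\mu_{\boldsymbol{W}}$ has to be identified directly by replaying the principle of single big jump inside the proof of Theorem~\ref{Th:3.8a}: the event $\{a\cdot\boldsymbol{W}>x\}$ is dominated either (i) by a single vertex-mark $\boldsymbol{U}_g$ being large, transporting $\mu_K$ projected onto its first $K$ coordinates, or (ii) by $N^{(K)}_g$ being large at some vertex, in which case the subtree it spawns has total charge approximately $(\Exp\boldsymbol{U})N^{(K)}_g$ and therefore produces an atom in the fixed direction $\Exp\boldsymbol{U}$. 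Keeping track of these two angular contributions, in the spirit of Propositions~\ref{Prop:5.8b}--\ref{Prop:5.8c}, is the main technical obstacle.

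Once \MRVF{} of $\boldsymbol{W}$ is secured, Proposition~\ref{Prop:5.8c} applied with $\bfT=(Q(i),N^{(1)}(i),\ldots,N^{(K-1)}(i))$, $N=N^{(K)}(i)$ and $\bfZ_m=\boldsymbol{W}_m$ shows that $\bigl(\bfT,N,\sum_m\boldsymbol{W}_m\bigr)$ is \MRVF. Since non-negative linear transformations preserve \MRVF{} (by pushforward of the angular measure), the componentwise sum $\bfT+\sum_m\boldsymbol{W}_m=\bfV^*(i)$ is \MRVF, which completes the proof.
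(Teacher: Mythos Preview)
Your decomposition
\[
\bfV^*(i)\;=\;\bigl(Q(i),N^{(1)}(i),\ldots,N^{(K-1)}(i)\bigr)+\sum_{m=1}^{N^{(K)}(i)}\boldsymbol{W}_m
\]
is natural, and for $\alpha\notin 2\nat$ your argument via Theorem~\ref{Th:3.8a} on scalar projections plus the Basrak--Davis--Mikosch characterisation is perfectly sound; the paper itself remarks at the end of Section~\ref{S:PresMRV} that this route is available when $\alpha$ is not an even integer. The genuine gap is the case $\alpha\in 2\nat$: you correctly identify that one would have to construct the angular measure of $\boldsymbol{W}$ by hand via a single-big-jump analysis, call this the ``main technical obstacle'', and then stop. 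But that is exactly the step the lemma is meant to supply, so as written the proof is incomplete for even integer $\alpha$.

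The paper's proof sidesteps this obstacle entirely by a different decomposition. Rather than summing the vector marks tree-by-tree into $\boldsymbol{W}_m$, it first records only the scalar tree sizes $|\tree_m(i)|$ and their total $M_1(i)=\sum_m|\tree_m(i)|$. Each $|\tree_m(i)|$ is the total progeny of a one-type Galton--Watson tree with offspring law $N^{(K)}(K)$, so the \emph{scalar} Theorem~\ref{Th:3.8a} (with $Q\equiv 1$) gives its RV tail directly, with no restriction on $\alpha$. One application of Proposition~\ref{Prop:5.8c} then makes $\bigl(\bfV(i),M_1(i)\bigr)$ \MRVF. In a second step, the total mark vector $M_2(i)=\sum_m\sum_{g\in\tree_m(i)}\bigl(Q_g(K),N_g^{(1)}(K),\ldots,N_g^{(K-1)}(K)\bigr)$ is viewed as a sum of $M_1(i)$ vectors distributed as the first $K$ coordinates of $\bfV(K)$, which are \MRVF\ by assumption; a second application of Proposition~\ref{Prop:5.8c} yields \MRVF\ of $\bigl(\bfV(i),M_1(i),M_2(i)\bigr)$, and $\bfV^*(i)$ is a linear image of this.

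The point of the paper's two-step trick is that it never needs a \emph{vector}-valued fixed-point result: the only fixed-point input is the scalar tail of $|\tree_m(i)|$, and the vector structure is handled purely by Proposition~\ref{Prop:5.8c}, whose proof in Section~\ref{S:PresMRV} builds the angular measure explicitly and works for all $\alpha$. Your approach collapses the two steps into one and thereby forces you to establish \MRVF\ for a vector satisfying a branching recursion, which is precisely the difficulty the paper's argument is designed to avoid.
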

\begin{proof}
Let $\bigl|\tree_m(i)\bigr|$ be the
number of elements of $\tree_m(i)$ and 
\begin{align*} M_1(i)\ &=\ \sum_{m=1}^{N^{(K)}(i)}\bigl|\tree_m(i)\bigr|\,,\\
M_2(i)\ &=\ \sum_{m=1}^{N^{(K)}(i)}\sum_{g\in\tree_m(i)}\bigl(Q_g(K),N_g^{(1)}(K),\ldots,N_g^{(1)}(K-1)\bigr)\end{align*}
Recall that our basic assumption is that the
\begin{equation}\label{7.8d} \bfV^*(i)\ =\ \bigl(Q(i), N^{(1)}(i),\ldots,N^{(K)}(i)\bigr)\end{equation} 
satisfy \MRVF. The connection to a Galton-Watson tree
and Theorem~\ref{Th:3.8a} with $Q\equiv 1$, $N=N^{(K)}(i)$ therefore imply that so does any $\bigl|\tree_m(i)\bigr|$,
and since these r.v.'s are i.i.d.\ and independent of $N^{(K)}(i)$, Proposition~\ref{Prop:5.8c} gives that
$\bfV_1(i)=\bigl(\bfV(i), M_1(i)\bigr)$ satisfies \MRVF. Now the \MRVF\ property of \eqref{7.8d} with $i=K$
implies that the vectors $\bigl(Q_g(K),N_g^{(1)}(K),\ldots,N_g^{(K-1)}(K)\bigr)$, being distributed as 
$\bigl(Q(K),N^{(1)}(K),\ldots,N^{(K-1)}(K)\bigr)$
again satisfy \MRVF. But $M_2(i)$ is a sum of $M_1(i)$ such vectors that are i.i.d.\ given $M_1(i)$.
Using Proposition~\ref{Prop:5.8c} once more gives that $\bfV_2(i)=\bigl(\bfV(i), M_1(i),M_2(i)\bigr)$ satisfies \MRVF.
But $\bfV^*(i)$ is a function of $\bfV_2(i)$. Since this function is linear, property \MRVF\ of
$\bfV_2(i)$ carries over to $\bfV^*(i)$.
\end{proof}

\begin{proof}[Proof of Theorem~\ref{Th:6.8a}]
We use induction in $K$. The case $K=1$ is just Theorem~\ref{Th:3.8a}, so assume 
Theorem~\ref{Th:6.8a} shown for $K-1$.

The induction hypothesis and Lemma~\ref{Lemma:7.8a}  implies that $\Prob(R(i)>x)\sim d_i\Fb(x)$ for $i=1,...,K-1$.
Rewriting \eqref{30.6a} for $i=K$ as
\begin{equation*} R(K) \ \eqdistr\  Q^*(K)+\sum_{m=1}^{N^{(K)}(K)}R_{m}(K)
\ \ \text{where}\ \  Q^*(K)\,=\,\sum_{k=1}^{K-1}\sum_{m=1}^{N^{(k)}(K)}R_{m}(k)\,,\end{equation*}
we have a \rev{fixed-point} problem of type \eqref{eq1} and can then use Theorem~\ref{Th:6.8a} to conclude that
also $\Prob(R(K)>x)\sim d_K\Fb(x)$, noting that the needed MRV condition on $\bigl(Q^*(K),N^{(k)}(K)\bigr)$
follows by another application of Proposition~\ref{Prop:5.8c}.

Finally, to identify the $d_i$ via \eqref{6.8d}, appeal to Proposition~\ref{Prop:2.8b} with
$\bfN=\bigl(Q(i), N^{(1)}(i),\ldots,N^{(K)}(i)\bigr)$, writing the r.h.s.\ of \eqref{30.6a} as
\[\Oh(1)\,+\,\sum_{m=1}^{\lfloor Q(i)\rfloor} 1\,+\, \sum_{k=1}^K\sum_{m=1}^{N^{(k)}(i)}R_{m}(k)\,.\]
Existence and uniqueness of a solution to \eqref{6.8d} follows by once more noticing that $\rho<1$
implies that $\bfI-\bfM$ is invertible.
\end{proof}

%:refs

\appendix

\section{\rev{Proof of \eqref{30.6ax}}}

\rev{The RV of linear combinations subject to MRV assumptions has received considerable attention, see e.g.~\cite{Basrak}, but we could not find explicit
formulas like \eqref{30.6ax} for the relevant constants so we give a
self-contained proof. The formula is a special case of the following: if $\bfX=(X_1\,\ldots\,X_n)\in\RL^n$
is a random vector such that $\Prob\bigl(\|\bfX\|>t\big)\sim L(t)/t^\alpha$
and $\bfTheta=\bfX/\|\bfX\|$ has   conditional limit distribution 
$\mu$ in ${\mathcal B}_1$ given $\|\bfX\|>t$ as $t\to\infty$, then
\[\Prob(\bfa\cdot\bfX>x)\ =\ \Prob(a_1X_1+\cdots+a_nX_n>t)\ \sim\ 
\frac{L(t)}{t^\alpha} \int_{{\mathcal B}_1}\indi(\bfa\cdot\bftheta>0)(\bfa\cdot\bftheta)^\alpha\mu(\dd\bftheta)\]}

\rev{To see this, note that given $\bfTheta=\bftheta\in{\mathcal B}_1$,
$\bfa\cdot\bfX =$ $ \|\bfX\|(\bfa\cdot\bftheta)$
will exceed $t>0$ precisely when $\bfa\cdot\bftheta>0$ and $\|\bfX\|>t/\bfa\cdot\bftheta$.
Thus one expects that
\begin{align*}\MoveEqLeft
\Prob(\bfa\cdot\bfX>t)\ \sim\ 
 \int_{{\mathcal B}_1}\indi(\bfa\cdot\bftheta>0)\Prob\bigl(\|\bfX\|>t/\bfa\cdot\bftheta\bigr)\,\mu(\dd\bftheta)
 \\ & \sim\ 
 \int_{{\mathcal B}_1}\indi(\bfa\cdot\bftheta>0)\frac{L(t/\bfa\cdot\bftheta)}{(t/\bfa\cdot\bftheta)^\alpha}\mu(\dd\bftheta)
 \  \sim\ 
\frac{L(t)}{t^\alpha} \int_{{\mathcal B}_1}\indi(\bfa\cdot\bftheta>0)(\bfa\cdot\bftheta)^\alpha\mu(\dd\bftheta)
 \end{align*}
 which is the same as asserted.}
 
 \rev{For the rigorous proof, assume $\|\bfa\|=1$. Then ${\mathcal B}_1$ is the disjoint union of the sets
 $B_{1,n},\ldots,B_{n,n}$ where $B_{i,n}=$ 
 $\{\bftheta\in{\mathcal B}_1:\,(i-1)/n<\bfa\cdot\bftheta\le i/n\}$
 for $i=2,\ldots,n$ and $B_{1,n}=$ $\{\bftheta\in{\mathcal B}_1:\,\bfa\cdot\bftheta\le 1/n\}$. Assuming $\Prob(\bfTheta=i/n)=0$ for all integers $i,n$, we get
 \begin{align*}%\MoveEqLeft
\Prob(\bfa\cdot\bfX>t)\ &=\ \sum_{i=1}^n \Prob\bigl(\bfa\cdot\bfX>t,\bfTheta\in B_{i,n}\bigr)\
\le\ \sum_{i=1}^n \Prob\bigl(\|\bfX\|>ti/n,\bfTheta\in B_{i,n}\bigr)
\\& \sim\ \sum_{i=1}^n \frac{L(ti/n)}{(ti/n)^\alpha} 
\Prob\bigl(\bfTheta\in B_{i,n}\,\big|\,|\bfX\|>t\bigr)\
\sim\ \frac{L(t)}{t^\alpha} \sum_{i=1}^n (i/n)^\alpha \Prob\bigl(\bfTheta\in B_{i,n}\bigr)\\  &=\ 
\frac{L(t)}{t^\alpha} \int_{{\mathcal B}_1}f_{+,n}(\bftheta)\,\mu(\dd\bftheta)
\end{align*}
where $ f_{+,n}$ is the step function taking value $(i/n)^\alpha$ on $B_{i,n}$.
A similar argument gives the asymptotic lower bound $\int \!f_{-,n}\,\dd\mu\,L(t)/t^\alpha$ for 
$\Prob(\bfa\cdot\bfX>t)$ where $ f_{-,n}$ equals $\bigl((i-1)/n)\bigr)^\alpha$ on $B_{i,n}$
for $i>1$ and $0$ on $B_{1,n}$.
But $f_{\pm,n}(\bftheta)$ both have limits $\bigl[(\bfa\cdot\bftheta)^+\bigr]^\alpha$ 
as $n\to\infty$ and are
bounded by 1. Letting $n\to\infty$ and using dominated convergence completes the proof.}

\rev{The case  $\Prob(\bfTheta=i/n)>0$ for some $i,n$ is handled by a trivial
redefinition of the $B_{i,n}$.}

\end{document}